\documentclass[12pt]{amsart}

\usepackage{amssymb}
\usepackage{fancyhdr}
\usepackage{bbm}
\usepackage{xcolor}
\usepackage{colonequals}
\usepackage{multirow}
\usepackage{rotating}
\usepackage{fullpage}
\usepackage{hyperref}
\hypersetup{
  colorlinks=true,
  linkcolor=blue,
  citecolor=blue
}
\usepackage{mleftright}

\newtheorem{theorem}{Theorem}
\newtheorem*{theorem*}{Theorem}
\newtheorem{lemma}[theorem]{Lemma}

\newtheorem{cor}[theorem]{Corollary}

\newtheorem{problem}[theorem]{Problem}

\newtheorem{conj}[theorem]{Conjecture}

\theoremstyle{definition}

\newtheorem*{definition*}{Definition}
\newtheorem*{lemma*}{Lemma}
\newtheorem{prop}[theorem]{Proposition}
\usepackage{graphicx}
\usepackage{pstricks, enumerate, pst-node, pst-text, pst-plot}

\numberwithin{equation}{section}
\numberwithin{theorem}{section}

\DeclareMathOperator{\aI}{\textnormal{I}}
\newcommand{\R}{\mathbb{R}}

\renewcommand{\epsilon}{\varepsilon}
\renewcommand{\supset}{\supseteq}

\DeclareMathOperator{\cI}{\mathcal{I}}

\DeclareMathOperator{\cS}{\mathcal{S}}

\DeclareMathOperator{\cZ}{\mathcal{Z}}
\DeclareMathOperator{\Ipla}{\mathcal{I}_{\textnormal{pla}}}
\DeclareMathOperator{\Itre}{\mathcal{I}_{\textnormal{tree}}}
\DeclareMathOperator{\dimm}{\underline{\textnormal{Exp}}}

\title{Independent Sets and Continued Fractions}
\author{Swee Hong Chan, Steven Heilman, Greta Panova}
\date{\today}
\thanks{
Email: sweehong.chan@rutgers.edu, stevenmheilman@gmail.com, gpanova@usc.edu\\
S.H.C. is supported by NSF Grant DMS 2246845.  S.H. is supported by NSF Grant CCF AF 2448108.  G.P. is supported by NSF Grant CCF AF 2302174.  \\
MSC Classification: 05C69, 11A55, 05C05, 05C10\\
Keywords: Independent set, continued fraction, planar graph, tree\\
Department of Mathematics, Rutgers University, Piscataway, NJ, 08854.\\
Department of Mathematics, University of Southern California, Los Angeles, CA 90089}

\begin{document}
\begin{abstract}
Linek's 1989 problem asks whether the numbers of independent sets of trees avoid infinitely many positive integers. We show that the set of natural numbers realized as the number of independent sets of a tree has a lower growth exponent of $0.1966$. We further prove that the set of positive integers representable by connected planar graphs has asymptotic density one. Lastly,  we establish a phase transition: the number of independent sets of graphs with fewer than $d|V|$ edges for any $d<1$ is contained in a set of density zero, whereas, following Shkredov's recent breakthrough on Zaremba's conjecture in continued fraction theory, there exists a constant $D$ such that the number of independent sets of graphs with at most $D|V|$ edges covers all positive integers.
\end{abstract}
\maketitle

\section{Introduction}\label{sec:intro}

An \emph{independent} set in a graph is a set of vertices no two of which are adjacent. For a graph $G$, let $i(G)$ denote the number of independent sets of $G$. This quantity is  also known as the Merrifield--Simmons index or the Fibonacci number of a graph.
Independent sets are among the most basic and most studied objects in graph theory, appearing in extremal graph theory, Ramsey theory, graph coloring, the hard-core model \cite{davies25}, and theoretical computer science \cite{greenhill00,weitz06,sly10}.

The possible values for graphs on $n$ vertices range from linear to exponential: the complete graph $K_{n}$ has $i(K_n)=n+1$, whereas the edgeless graph has $2^n$ independent sets.  Without any restrictions on the graph, the inverse problem is trivial: every positive integer $m$ has at least one graph $G$ with $i(G)=m$.  
In this paper, we study the inverse problem for the map $i$,
which becomes considerably more challenging when the class of allowable graphs is restricted.  Specifically, we ask:

\medskip
Which integers can occur as $i(G)$ when $G$ is drawn from specific classes?
\medskip

The first to systematically study this inverse problem for restricted classes of graphs was Linek~\cite{linek89}, who demonstrated  that $i\colon\{$\textbf{\emph{bipartite}} graphs$\}\to\mathbb{N}=\{1,2,3,\ldots\}$ is surjective.
He further observed that this result does not extend to the more restrictive class of \emph{\textbf{trees}}, as he found specific integers that cannot be realized as the number of independent sets of any tree. This prompted him to pose the following question:

\smallskip

\begin{problem}[\textbf{Linek's Problem}~{\cite{linek89}}]\label{prob:Linek}
Are there infinitely many positive integers that are not the number of independent sets of a tree?

\end{problem}

\smallskip

Virtually no real progress has been made on this problem over the last 35 years. Part of the reason this problem is so difficult is that traditional approaches, such as gadgets and constructions emulating addition and multiplication or those relying on randomized constructions, fail due to Wagner's observation~\cite{wagner09} that almost all random trees have an even number of independent sets (a result further generalized to $F$-matchings by Alon--Haber--Krivelevich~\cite{alon11}).  In fact, \cite{alon11} implies that a large random tree will have a large number of distinct prime factors, implying that a random tree $T$ will rarely satisfy that $i(T)$ is prime, for example, which will not give a negative answer to Problem \ref{prob:Linek}.

On the computational front, Law~\cite{law12} verified that among the integers up to $832040$, only $823$ fail to occur as the number of independent sets of a tree, and the largest missing value is $88013$. In this paper, we extend this evidence substantially: we verify computationally that every integer between  $88014$ and $30$ million is realized as the independent set number of some tree. The missing values are included in the appendix. \footnote{The verification code appears in GitHub \href{https://github.com/sheilman77/independent_sets/}{https://github.com/sheilman77/independent\underline{ }sets/}}
This computational evidence convinces us to make the following conjecture.

\begin{conj}[\textbf{Effective Linek's Problem}]\label{conj:effective_linek}
Every integer greater than $88013$ appears as the number of independent sets of some tree.
\end{conj}

\smallskip

We remark that this conjecture contrasts with a recent conjecture by Han et al.~\cite{han25}, who conjectured that there exist infinitely many positive integers that cannot be realized as the number of independent sets of any tree.

In this paper, we seek to advance the frontier of Linek’s Problem~\ref{prob:Linek} by establishing  theoretical results concerning  three \emph{\textbf{sparse}} graph classes: trees, connected planar graphs, and graphs of bounded average degree.
Our focus on \emph{sparse} graphs is motivated by the observation that the difficulty of Linek’s Problem for trees likely stems from the lack of edges, which are useful in controlling the occurrence of independent sets.
Indeed, the result of Linek for bipartite graphs relied on dense constructions with  $\Omega(|V|^2)$ edges, whereas  Problem \ref{prob:Linek} is for the sparsest connected graphs.

Our main results are as follows:
\begin{itemize}
\item For connected planar graphs, we prove that almost every positive integer occurs as a number of independent sets. 
\item For trees, we show that the number of attainable independent set values up to $N$ grows at least as large as $N^{.1966}$. 
\item For graphs of bounded average degree, we prove a phase transition: below average degree $2$, only a density-zero set of integers can occur as the number of independent sets, whereas above some absolute constant average degree, every positive integer occurs. 
\end{itemize}
Thus, compared with Linek's original bipartite construction, our results show that {\bf graphs with only $O(|V|)$ edges already suffice to realize almost all, and in some regimes all, positive integers}.
We state these three results formally below.

\medskip

\subsection{Trees}
For any $S \subseteq \mathbb{N}$, the \emph{lower growth exponent} of $S$ is 
\[ \dimm(S) \ \colonequals \ \liminf_{N \to \infty}  \frac{\log(|S \cap \{1,\ldots,N\}|)}{\log N}.
\]
We denote by $\Itre \subseteq \mathbb{N}$ the set of natural numbers that can be expressed as the number of independent sets of some tree.
\[  \Itre \ \colonequals \ \{ \, i(T) \, \mid \,  T \text{ is a tree} \, \}.   \]
We also use  the Vinogradov notation
that $f(N) \ll g(N)$ means that
there exists a constant $C > 0$
such that $f(N) \leq C g(N)$ for
all sufficiently large $N$.

\smallskip

\begin{theorem}\label{thm:tre}
The lower growth exponent of $\Itre$ satisfies
\[
\dimm(\Itre)\ge \frac{\log_\tau 2}{4}\approx 0.1966,
\qquad\text{where }\tau\colonequals 1+\sqrt{2}.
\]
Equivalently,
\[ |\Itre \cap \{1,\ldots,N\}| \quad \gg \ N^{\frac{\log_\tau 2}{4}-o(1)}  \ \approx \ N^{0.1966}.  \]
\end{theorem}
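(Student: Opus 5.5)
The plan is to encode words in a two-letter alphabet as trees, so that distinct words give distinct values of $i(T)$ and every value is at most about $\tau^{4k}$ for words of length $k$. Then $\{1,\ldots,N\}$ with $N=\tau^{4k+O(1)}$ contains at least $2^k$ elements of $\Itre$. This gives $\log|\Itre\cap\{1,\ldots,N\}| \ge k\log 2 = \frac{\log_\tau 2}{4}\log N - O(1)$. Interpolating between consecutive $k$ costs only the $o(1)$ in the exponent.

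\textbf{Step 1: linear recursion for rooted trees.} For a rooted tree $(T,v)$, record the vector $(x,y)$, where $x$ counts the independent sets avoiding $v$ and $y$ counts those containing $v$, so that $i(T)=x+y$. Attaching a new root $r$ adjacent to $v$ and to the roots of fixed pendant gadgets acts linearly on $(x,y)$. For example, adding a new root above $v$ alone gives $(x,y)\mapsto(x+y,x)$, i.e.\ the matrix $\begin{pmatrix}1&1\\1&0\end{pmatrix}$. Each gadget therefore contributes a $2\times 2$ nonnegative integer matrix, and composing gadgets multiplies the matrices.

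\textbf{Step 2: choose two gadgets.} I would choose two gadgets whose matrices $M_0,M_1$ are unimodular ($\det=\pm1$), possibly after combining several elementary attachments into one block. The point of unimodularity is that every product $M_{w}$ maps a coprime starting vector such as $(1,1)$ to a coprime vector $(x_w,y_w)$. The ratio $x_w/y_w$ then has a continued fraction expansion in which the word $w$ can be read off. The target is for $M_0,M_1$ to behave like partial quotients $1$ and $2$ (up to fixed conjugations/prefixes). The number $\tau=1+\sqrt2$ is the dominant eigenvalue of $\begin{pmatrix}2&1\\1&0\end{pmatrix}$, and I expect each tree-realizable block to cost at most four such steps. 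That would bound the norm of each block by $O(\tau^4)$ and give $x_w+y_w\ll \tau^{4k}$.

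\textbf{Step 3: injectivity.} This is the step I expect to be the main obstacle. From the integer $i(T_w)=x_w+y_w$ alone, rather than from the pair $(x_w,y_w)$, one must recover $w$, or at least show that the map $w\mapsto x_w+y_w$ has bounded fibers. I would try the following. Given $n=x_w+y_w$, the vector $(x_w,y_w)$ is a coprime pair on the line $x+y=n$. The block structure forces $x_w/y_w$ to lie in a restricted interval and to have a continued fraction with quotients in a fixed finite pattern. I would then show, via uniqueness of continued fraction expansions together with a Zaremba-type counting of such fractions with given denominator, that only $O(1)$ (or $n^{o(1)}$) words produce the same $n$. If bounded multiplicity is hard to prove, a weaker bound of $n^{o(1)}$ per fiber would still suffice for the stated exponent.

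If the pure two-gadget scheme does not produce unimodular matrices in trees, because pendant gadgets introduce factors such as $2$ or $3$, the fallback is to let the extra factors be common to all words. Fixing the prefix and suffix then multiplies every value by the same constant, which preserves distinctness.
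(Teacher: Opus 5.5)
\textbf{There is a genuine gap, and it sits where you flagged it.}

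\emph{Step~2 cannot be realized in trees.} Any tree gadget joins the old root to the new root by a path carrying pendant subtrees. Its transfer matrix is a product of copies of $\begin{pmatrix}1&1\\1&0\end{pmatrix}$ and diagonal matrices $\mathrm{diag}\bigl(\prod_i(a_i+b_i),\prod_i b_i\bigr)$, where $a_i,b_i$ count independent sets of the $i$-th pendant subtree containing, resp.\ avoiding, its attachment vertex. Every nontrivial diagonal factor has determinant at least $2$. So only bare paths are unimodular, and those give powers of a single matrix, which cannot encode a word.

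Your fallback does not help. The extra factors are paid per letter and depend on the word: with the natural gadgets realizing partial quotients $1$ and $2$, each letter $2$ multiplies $\gcd(x,y)$ by $2$. They are not a constant coming from a fixed prefix or suffix.

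\emph{Step~3 asks for something false.} The injectivity you want fails for that natural encoding, and Zaremba-type theorems could not supply it anyway: they show representations \emph{exist} (typically many), never a pointwise upper bound on fibers. Concretely, the $\binom{\ell}{\ell/2}\ge 2^\ell/(\ell+1)$ words with exactly $\ell/2$ letters $2$ all give $i(T_w)=2^{\ell/2}m_w$ with $m_w\le 2(\phi\tau)^{\ell/2}$. Since $\phi\tau\approx 3.91<4$, pigeonhole forces fibers of average size at least $(4/\phi\tau)^{\ell/2}/(2\ell+2)$, which is a positive power of $n$. Your ``four $\tau$-steps per block'' is likewise asserted to match the target exponent rather than derived from explicit gadgets.

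\textbf{The missing idea is that no single value needs to determine the word.} The paper uses exactly the gadgets above. For $k=2$: hang a leaf on the root, add a new root, hang a leaf on it. For $k=1$: just add a new root. These act by $(g,r)\mapsto(kg,1/(k+r))$, so $g=\gcd(a,b)$ is a power of $2$, and Lemma~\ref{lem:basic-rec-b} gives $g\le q$.

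Prepending $1$ or $1,1$ makes the denominator $Q$ of $P/Q=1/(1+r)$ odd. Then $i(T)=gQ$, while the tree $T'$ obtained by hanging one more leaf on the root has $i(T')=g(P+Q)$. The \emph{pair} $(i(T),i(T'))$ recovers $g$ (the $2$-part of $i(T)$), $Q$ (its odd part) and $P$, hence the word up to two choices. This gives $2^{\ell-1}$ distinct points of $\{1,\ldots,N\}^2$ with $N\asymp\tau^{2\ell}$. A set of $m$ points lies in the product of its two projections, so one coordinate takes at least $\sqrt m$ values.

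Thus $\frac{\log_\tau 2}{4}$ arises as $\log_\tau 2$ times $\frac12$ (from $g\le Q$, so $i(T)\le Q^2$) times $\frac12$ (the square-root loss), not from a cost of $\tau^4$ per letter.
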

\smallskip
We remark that the primary goal of Theorem~\ref{thm:tre} is to establish a strictly positive lower bound on the lower growth exponent, as we do not attempt to optimize the numerical lower bound.
We conjecture that a stronger version of Theorem~\ref{thm:tre} holds, 
which can also be viewed as another weak form of Linek's 
Problem~\ref{prob:Linek}. 

\smallskip
\begin{conj}[\textbf{Density Linek's Problem}]\label{conj:tre}
The set $\Itre$, consisting of integers expressible as the number of independent sets of some tree, has positive lower density, i.e.
\[
\liminf_{N \to \infty} \frac{|\Itre\cap \{1,\ldots,N\}|}{N} \ > \ 0.
\]\end{conj}

In fact, Conjecture \ref{conj:tre} would follow from Hensley's Conjecture in number theory.  See Section \ref{ss:tre} for further discussion on this point.

\medskip

\subsection{Planar graphs}
Denote by $\Ipla \subseteq \mathbb{N}$ the set of integers that can be expressed as the number of independent sets of some connected planar graph,
\[  \Ipla \ \colonequals \ \{ \, i(P) \, \mid \,  P \text{ is a connected planar graph} \, \}.   \]
Let $\mathbb{N}\colonequals\{1,2,3,\ldots\}$.  The \emph{density} of a subset 
$S \subseteq \mathbb{N}$ is the quantity    
 \, $\lim_{N \to \infty} \frac{|S\cap \{1,\ldots,N\}|}{N}$, if the limit exists.  
We say that $S$ \emph{contains almost every integer} if $S$ has  density 1.

\smallskip

\begin{theorem}\label{thm:pla}
\textbf{Almost every} positive integer can be expressed as the number of independent sets of some connected planar graph.
That is, the set $\Ipla$ has density 1.
\end{theorem}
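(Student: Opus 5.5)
The plan is to translate connected planar graphs into continued fractions with bounded partial quotients, and then invoke the density-one theorem on Zaremba's conjecture of Bourgain--Kontorovich, in the strengthened form of Huang. That theorem says that almost every $q\in\mathbb{N}$ is the denominator of a reduced fraction $p/q=[0;a_1,\ldots,a_n]$ with all $a_j\in\{1,\ldots,5\}$. If some $a_j$ turns out not to be realizable, I would instead use the Bourgain--Kontorovich version with a larger alphabet, $\{1,\ldots,50\}$.

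\emph{Rooted transfer.} For a graph $G$ with a distinguished root $r$, put
\[
x(G,r)=\#\{\text{independent sets avoiding } r\},\qquad y(G,r)=\#\{\text{independent sets containing } r\},
\]
so that $i(G)=x+y$. For $k\ge 0$ and a graph $H$ on $k$ vertices, the gadget $\Gamma_H$ does three things:
\begin{itemize}
\item it adds a new vertex $v$ adjacent to $r$;
\item it adds a copy of $H$ on new vertices $w_1,\ldots,w_k$, each adjacent to both $r$ and $v$;
\item it makes $v$ the new root.
\end{itemize}
Now count the independent sets $I$ of the new graph.
\begin{itemize}
\item If $v\in I$, then $r$ and all $w_j$ are excluded, which gives $x$ sets.
\item If $v\notin I$ and $r\in I$, then all $w_j$ are excluded, which gives $y$ sets.
\item If $v\notin I$ and $r\notin I$, then the $w_j$ contribute $i(H)$ choices, which gives $i(H)\,x$ sets.
\end{itemize}
Hence
\[
(x,y)\longmapsto (a x+y,\;x),\qquad a=i(H),
\]
which is exactly the matrix $\begin{pmatrix} a&1\\ 1&0\end{pmatrix}$ of a partial quotient $a$. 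Choosing $H$ as follows realizes every $a\le 5$:
\begin{itemize}
\item $a=1$: $H=\emptyset$;
\item $a=2$: $H=K_1$, which forms a triangle with $r$ and $v$;
\item $a=3$: $H=P_2$;
\item $a=4$: $H$ is two isolated vertices;
\item $a=5$: $H=P_3$.
\end{itemize}
In every case the $w_j$ are joined to both $r$ and $v$ and form a path or independent set among themselves. Every gadget is therefore a subgraph of a fan or wheel strip between $r$ and $v$, hence planar.

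\emph{Planarity and connectivity of the iteration.} Start from a single vertex, which has $(x,y)=(1,1)$, and apply gadgets $\Gamma_{a_n},\ldots,\Gamma_{a_1}$ in succession. At each step I will keep the current root on the outer face. The new gadget is then drawn inside the outer face near $r$, its vertex $v$ is placed on the new outer face, and the result is a connected planar graph. Since $(1,1)=\begin{pmatrix}1&1\\1&0\end{pmatrix}(1,0)^{T}$, the final vector is
\[
M_{a_1}\cdots M_{a_n}M_1\,(1,0)^{T}.
\]
The final count is $x+y$, and $x+y$ equals the first coordinate after one more application of $M_1$. It follows that $i(G)$ is the continuant $K(1,a_1,\ldots,a_n,1)$, which is the denominator of a continued fraction whose partial quotients all lie in $\{1,\ldots,5\}$.

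\emph{Matching with Zaremba denominators.} The main care needed is to check that the set of such continuants, with the forced $1$'s at both ends, still has density one. Three facts handle this.
\begin{itemize}
\item Reversal symmetry $K(b_1,\ldots,b_m)=K(b_m,\ldots,b_1)$ lets me treat the two ends symmetrically.
\item A final quotient $a_m\ge 2$ can be rewritten as $(a_m-1,1)$ without changing the value of the continued fraction. So any denominator $q$ of some $[0;b_1,\ldots,b_m]$ with $b_j\le 5$ is also a continuant ending in $1$.
\item At the front, $K(1,b_2,\ldots)$ versus $K(b_1,\ldots)$ is handled the same way by reversal.
\end{itemize}
Thus every Zaremba-admissible denominator $q$ equals $i(G)$ for some connected planar $G$, and Huang's theorem (or Bourgain--Kontorovich with gadgets for all $a\le 50$, which still exist, for instance $H=$ a disjoint union of paths with $i(H)=a$) gives that $\Ipla$ has density one.

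I expect the main obstacle to be this final bookkeeping: making the continuant produced by the graph exactly match the denominator in the cited density theorem, with the boundary quotients and the alphabet constraint correct. The graph-theoretic part, namely the gadget recursion and planarity, is routine.
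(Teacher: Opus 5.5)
Your proof is correct and is essentially the paper's argument: your gadget $\Gamma_H$ is the paper's gluing operation, with $G'$ the join of the edge $rv$ with $H=G'-v'-w'$ and the same five choices of $H$, so it gives the same transfer matrix, and Huang's $A=5$ density theorem finishes the proof; your reversal and $(b-1,1)$ rewriting handle the forced boundary quotients more explicitly than the paper's bookkeeping does. The one inaccurate remark is the unused fallback to $A=50$: a disjoint union of paths has $i(H)$ equal to a product of Fibonacci numbers, so values such as $a=7$ cannot be realized that way, but this does not matter since $A=5$ suffices.
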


\smallskip

We conjecture that a stronger version of Theorem~\ref{thm:pla} holds, 
which can also be viewed as a weak form of Linek's 
Problem~\ref{prob:Linek}. 
\smallskip

\begin{conj}[\textbf{Planar Linek's Problem}]\label{conj:pla}
    All positive integers can be expressed as the number of independent sets of some connected planar graph.
\end{conj}

\smallskip

This conjecture is further supported by computation, as connected planar graphs realize all $823$ values missing from Law's tree computation.  Note that we consider the empty graph to be a connected planar graph.

\subsection{Graphs with bounded average degree}
 The
\emph{average degree} $d(G)$
of a  nonempty graph $G=(V,E)$ is defined
as $2|E|/|V|$. Let $d>0$.  We
denote by $\cI_{d} \subseteq
\mathbb{N}$ the set
\[ \cI_{d} \ \colonequals \  \{ \,  i(G)
\, \mid \,  G \text{ is a
graph with  }  d(G)\leq d \, \}. \]
Here we do \emph{not} require $G$
to be connected.  We also define $d($empty graph$)\colonequals0$.
\smallskip

\begin{theorem}\label{thm:phase}
There exists a fixed constant $D>0$ such that the following holds
\begin{enumerate}
    \item         If $d \in (0,2)$, then there exist constants $0<c_1(d)\leq c_2(d)<1$  such that 
    \[  N^{c_1(d)}  \quad \ll \quad  |\cI_d \, \cap \,  \{1,\ldots, N\}|  \quad \ll \quad N^{c_2(d)}. \]
\item If $d \geq D$, then every positive integer appears in  $\cI_d$.
\end{enumerate}

\end{theorem}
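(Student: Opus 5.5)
For part (1) I would exploit that a graph with average degree $d<2$ has many components. Let $G$ have $n$ vertices, at most $dn/2$ edges, and components $C_1,\dots,C_c$. Then $i(G)=\prod_j i(C_j)$ and $c\ge n-|E|\ge (1-d/2)n$. At most $n/s$ components have more than $s$ vertices, so with $s$ fixed so that $1/s<(1-d/2)/2$, at least $k\ge \tfrac12(1-d/2)n$ components have at most $s$ vertices. Each such component has its $i$-value in a fixed finite set $A_s\subseteq[2,2^s]$. Since $i(G)\le 2^n$, any $m=i(G)\in(N/2,N]$ has $n\ge \log_2(N/2)$. Hence $m=a\cdot r$, where $a$ is a product of at least $K:=\epsilon\log N$ elements of $A_s$, with $\epsilon=\epsilon(d)>0$.

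The number of such $m\le N$ is at most $N\sum_a 1/a$, the sum running over multisets of size at least $K$ from $A_s$. Fix $t\in(1,2)$. Weighting multisets of size $k$ by $t^{k-K}\ge 1$ gives
\[
\sum_a \frac1a\ \le\ t^{-K}\prod_{x\in A_s}\Bigl(1-\frac tx\Bigr)^{-1}\ \ll\ N^{-\epsilon\log t},
\]
which is finite because every $x\ge 2>t$. Summing over dyadic ranges gives the upper bound $N^{c_2(d)}$ with $c_2=1-\epsilon\log t<1$.

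For the lower bound in (1), I would use Theorem~\ref{thm:tre} together with the classical fact that among trees on $k$ vertices the path minimizes $i$, so $i(T)\ge F_{k+2}$. Thus every value in $\Itre\cap[1,N]$ is realized by a tree with $k\le C\log N$ vertices. Adjoin $j=\lceil C'\log N\rceil$ isolated vertices, with $C'$ chosen so that $2(k-1)/(k+j)\le d$; the new average degree is then at most $d$. The map $T\mapsto i(T)2^j$ is injective on values and lands in $[1,N^{C''}]$, so $|\cI_d\cap[1,M]|\gg M^{0.1966/C''-o(1)}$, giving $c_1(d)>0$. The lower-order issue here is that $k$ varies between trees, but $k\le C\log N$ is all that is needed to fix a single $j$.

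For part (2), I would encode continued fractions by graphs. The recursion $i(G)=i(G-v)+i(G-N[v])$ means that attaching gadgets at a distinguished vertex acts linearly on the state vector $(i(G),i(G-v))$. For example, attaching a pendant $u$ to $v$ and making $u$ distinguished sends $(\alpha,\beta)\mapsto(\alpha+\beta,\alpha)$. The plan is then as follows.
\begin{enumerate}
\item Design, for each $a\le A$, a gadget with $O(A)$ vertices and edges whose attachment realizes $\begin{pmatrix}a&1\\1&0\end{pmatrix}$, possibly up to composition with fixed bounded matrices. I would try pendant paths and small fans attached at $v$, writing each needed matrix as a bounded word in the Fibonacci step and a "skip" step (a new vertex adjacent to $v$ and to its previous distinguished neighbour).
\item Invoke Shkredov's theorem (Zaremba with an absolute bound $A$) to write any $q$ as the numerator of $[0;a_1,\dots,a_r]$ with all $a_i\le A$. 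Since $q$ is a continuant, i.e. an entry of $\prod_i\begin{pmatrix}a_i&1\\1&0\end{pmatrix}$, chaining gadgets produces $G$ with $i(G)=q$, possibly after a final bounded correction gadget.
\end{enumerate}
Each gadget adds boundedly many edges per vertex, so $d(G)\le D$. Small $q$ are handled by finitely many explicit graphs.

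The main obstacle I expect is step 1, i.e. realizing the exact matrices from bounded-size local attachments. The state might need a third coordinate such as $i(G-N[v])$; in that case I would enlarge the state and check that the continuant still appears as one entry of the product.
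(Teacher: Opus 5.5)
Your part (1) is correct and takes a different route from the paper. For the upper bound, you run a Rankin-type weighted sum over the $\gg\log N$ components with at most $s$ vertices, whose $i$-values lie in a fixed finite set. The paper instead quotes Luca's bound on $\#\{x\le N:\Omega(x)\ge k\}$ and splits into graphs with many or few components; your argument is self-contained. For the lower bound, you pad the trees of Theorem~\ref{thm:tre} with isolated vertices; these trees have $O(\log N)$ vertices since $i(T)\ge F_{k+2}$. The paper pads the planar graphs of Proposition~\ref{planarprop} and so needs Huang's theorem. Your version avoids the Bourgain--Kontorovich machinery altogether.

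Part (2), however, has a genuine gap. Your step~1 is the entire content of the argument, and you leave it as a search while naming it the main obstacle. The directions you suggest do not obviously work. Let $a,b$ be the numbers of independent sets containing / not containing the marked vertex. Gluing a tree at $v$ and keeping $v$ marked acts by $\mathrm{diag}(a',b')$, of determinant at least $2$ unless trivial. Moving the mark along a pendant edge acts by $T_1$. So tree-like attachments give $T_k$ only up to scalar factors, which is exactly why Section~\ref{sec:tre} only reaches $i(T)=gQ$ with $g$ a power of $2$. Your ``skip'' step touches two old vertices, and that is what forces a third coordinate. The missing idea is to let each gadget meet the old graph only in the marked vertex. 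If a graph $H$ with distinguished vertices $v',w$ is glued by identifying $v'$ with $v$, and $w$ becomes the new mark, then $(a,b)$ is mapped linearly. The matrix entries count independent sets of $H$ with prescribed membership of $v'$ and $w$, so no third coordinate is ever needed. Taking $H=K_{k+1}$ gives exactly $(a,b)\mapsto(b,a+kb)$, i.e.\ $T_k=\begin{pmatrix}0&1\\1&k\end{pmatrix}$. Indeed, $w$ can be present only when $v$ and the other $k-1$ clique vertices are absent, and when $v$ is absent at most one of those $k-1$ vertices can be present. Each gluing adds $k$ vertices and $\binom{k+1}{2}$ edges, which is where $D=A+1$ comes from.

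There is also a second, smaller gap: Theorem~\ref{thm:Shk} only places every \emph{prime} in $Q_A$, not every $q$ as your step~2 asserts. You need the extra reduction that suffices to realize primes. It holds because $i$ is multiplicative over disjoint unions, and a disjoint union of graphs of average degree at most $D$ again has average degree at most $D$. The empty graph realizes $1$.
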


We conjecture that {\bf the size of
$\cI_d$ exhibits a sharp phase transition and that one can choose $D=2$} in the above theorem.

\begin{conj}\label{conj:phase}
If $d\geq 2$,  then all but finitely many positive integers appear in $\cI_d$. 
\end{conj}
In fact, computations suggest that the only positive integers not appearing are $71$ and $191$, though edge densities $20/9$ and $24/11$ can be achieved for those, respectively.

\smallskip
That is to say, if the average degree is strictly below $2$, then the possible numbers of independent sets  are restricted to a set of integers with a density of zero, whereas for an average degree at least $2$, every positive integer is achievable (except two). Naturally, a negative answer to Linek's Problem~\ref{prob:Linek}  would imply the above conjecture.  Specifically, we have that Conjecture~\ref{conj:effective_linek} (effective Linek's problem) $\Longrightarrow$ Negative answer to Linek's problem~\ref{prob:Linek} $\Longrightarrow$ Conjecture~\ref{conj:tre} (density Linek's Problem) and Conjecture~\ref{conj:phase} (sharp phase transition at $d=2$).  
We also note a key distinction between the assumptions of Theorem~\ref{thm:phase} (for graphs with bounded average degree) and those of Theorems~\ref{thm:tre} (for trees) and \ref{thm:pla} (for connected planar graphs): we do not require the graphs to be connected in the former, whereas we do require them to be connected in the latter.

\smallskip

All three results are proved by applying a connection between independent set counts and continued fractions.
The proof of Theorem~\ref{thm:tre} (trees) draws  inspiration from the work of Alon--Buci\'c--Gishboliner~\cite{ABG}, and is relatively elementary. In contrast, the proof of Theorem~\ref{thm:pla} (planar graphs) requires the more advanced Bourgain--Kontorovich theory~\cite{bourgain14}.
 For Theorem~\ref{thm:phase} (graphs with bounded average degree), the subcritical regime builds on Theorem~\ref{thm:pla} together with standard enumerative arguments, while the supercritical regime uses Shkredov's breakthrough on Zaremba's conjecture~\cite{Shk}.
{{It is worth noting that the phase transition in Theorem~\ref{thm:phase} does not strictly require Shkredov's result; applying Theorem~\ref{thm:pla} instead yields a similar transition, but with the weaker guarantee that $\cI_d$
  is a set of density 1 in part (2)}}. 
 Our continued-fraction viewpoint is in the same spirit as the recent work of Chan--Kontorovich--Pak~\cite{CKP} on Sedl\'{a}\v{c}ek's problem for spanning tree counts.

\smallskip

The paper is organized as follows. In Section~\ref{sec:zar}, we recall various results on continued fraction theory and Zaremba's conjecture. In Sections~\ref{sec:tre}, \ref{sec:pla}, and \ref{sec:phase}, we prove Theorems~\ref{thm:tre}, \ref{thm:pla}, and \ref{thm:phase}, respectively. We conclude with a discussion in Section~\ref{sec:final}.

\section{Zaremba's Conjecture}\label{sec:zar}

In this section, we review results related to Zaremba's conjecture in number theory, which will play a crucial role in the proofs of the main results.

\smallskip

For positive integers $a_1,\ldots, a_\ell \geq 1$, the corresponding \emph{continued fraction} is 
\[
   [a_1,a_2,\dots,a_\ell]
   \;\colonequals\;
   \cfrac1{a_1+\cfrac1{a_2+\cdots+\cfrac1{a_\ell}}}.
\]
It follows from the Euclidean algorithm that every rational number in the interval $(0,1)$ admits a continued fraction expansion.
One of the longstanding open problems in continued fraction theory is the following conjecture of Zaremba~\cite{Zar}.
For a positive integer $A$, let $Q_A$ denote the set of denominators 
of continued fractions whose partial quotients are bounded by $A$, i.e.
\begin{flalign*}
Q_A \colonequals \{ q \in \mathbb{N} \colon \exists\, p\in\mathbb{N}&\text{ with }p < q \text{ such that } 
\gcd(p,q)=1\\
&\text{ and } \tfrac{p}{q} = [a_1,\ldots,a_\ell] \text{ with } 
a_1,\ldots, a_\ell \leq A \}\cup\{1\}.
\end{flalign*}

\smallskip

\begin{conj}[\textbf{Zaremba's Conjecture~\cite{Zar}}]\label{conj:Zar}
For $A=5$, the set $Q_A$ consists of all positive integers.
\end{conj}

\smallskip

Virtually no progress was made on this conjecture until the breakthrough work of Bourgain and Kontorovich~\cite{bourgain14}, who showed that the set $Q_A$ has  density $1$ for $A=50$.  This result was subsequently improved to $A=5$ by Huang~\cite{huang15}.
The latter  will be  a key result used in this paper, so we state it below.

\smallskip

\begin{theorem}[{\cite[Thm~1.6]{huang15}}]\label{thm:Huang}
For $A=5$, the set $Q_A$ has density  equal to $1$.
\end{theorem}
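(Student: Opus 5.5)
Theorem~\ref{thm:Huang} is quoted from \cite{huang15}, and in this paper we would simply invoke it. If we had to reprove it, we would follow the Bourgain--Kontorovich circle-method strategy \cite{bourgain14} with Huang's refinement of the minor-arc analysis.

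\textbf{Setup.} The first step recasts $Q_A$ in terms of a thin semigroup. Let $\Gamma_A\subseteq \mathrm{GL}_2(\mathbb{Z})$ be the semigroup generated by the matrices $\begin{pmatrix}0&1\\1&a\end{pmatrix}$ with $1\le a\le A$. Products of these generators encode continued fractions $[a_1,\ldots,a_\ell]$, and the bottom-right entry of such a product is the denominator $q$. So $Q_A$ is, up to trivial adjustments, the set of values of the linear functional $\langle \gamma e_2, e_2\rangle$ over $\gamma\in\Gamma_A$.

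The key numerical input is Hensley's asymptotic
\[
\#\{\gamma\in\Gamma_A:\|\gamma\|\le N\}\asymp N^{2\delta_A},
\]
where $\delta_A$ is the Hausdorff dimension of the Cantor set of reals whose partial quotients are all at most $A$. One checks numerically that $\delta_5>5/6$. This is what makes the number of available representations exceed $N$ by a power margin.

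\textbf{Ensemble and representation count.} Next we would build a structured subensemble $\Omega_N\subseteq\Gamma_A$ of products of norm about $N$. It should be a product $\Omega_N=\Xi_1\cdot\Xi_2\cdots$ of pieces at dyadic scales, which gives it bilinear structure. Define the representation count
\[
R_N(q)=\sum_{\gamma\in\Omega_N}\mathbbm{1}\{\langle\gamma e_2,e_2\rangle=q\}
\]
and expand it by Fourier analysis on $\mathbb{R}/\mathbb{Z}$, $\theta\mapsto S_N(\theta)$. We then split into major arcs ($\theta$ near $a/r$ with $r$ small) and minor arcs.

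\textbf{Major arcs.} On the major arcs the main term should be $\mathfrak{S}(q)\,|\Omega_N|/N$, where the singular series $\mathfrak{S}(q)$ is bounded below for admissible $q$; there are no local obstructions, because $\Gamma_A$ is Zariski dense and its reduction surjects onto $\mathrm{SL}_2(\mathbb{Z}/r)$. Controlling the error term requires equidistribution of $\Omega_N$ modulo $r$ with a power saving uniformly in $r$. For this we would use the uniform spectral gap for congruence quotients of thin groups (Bourgain--Gamburd--Sarnak, and Bourgain--Kontorovich--Sarnak for the semigroup setting), together with Lalley-type renewal estimates at the archimedean place.

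\textbf{Minor arcs and conclusion.} The step we expect to be the main obstacle is showing that
\[
\int_{\text{minor}}|S_N(\theta)|^2\,d\theta=o\bigl(|\Omega_N|^2/N\bigr).
\]
We would first try to exploit the bilinear structure of $\Omega_N$: apply Cauchy--Schwarz to reduce to counting solutions of congruences among pairs of matrices, then bound these with Kloosterman-type and large-sieve estimates, treating separately the ranges of denominator $r$ and of the distance from $\theta$ to $a/r$. Bourgain--Kontorovich could close this only for $\delta_A$ very close to $1$, which is why they needed $A=50$. Huang's improvement would come from a sharper treatment of these sums (better averaging over $r$ and over the archimedean parameter), lowering the threshold to $\delta_A>5/6$.

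Once the minor-arc bound holds, Parseval gives $R_N(q)>0$ for all $q\le N$ outside an exceptional set of size $o(N)$. This proves that $Q_5$ has density $1$.
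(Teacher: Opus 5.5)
Your proposal matches the paper's treatment: the paper gives no argument for this statement and simply imports it from Huang~\cite{huang15}, exactly as you do, and your outline (Bourgain--Kontorovich circle method, Huang's sharper minor-arc analysis with threshold $\delta_A>5/6$, satisfied since $\delta_5\approx 0.837$) is a fair roadmap of the cited proof rather than a self-contained argument. One small correction to the roadmap: the singular series is only $\gg 1/\log\log q$, since the local factor at each $p\mid q$ is $p/(p+1)$, so it is not uniformly bounded below; this is harmless because it costs only a $\log\log$ factor against the minor-arc savings.
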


\smallskip

We will also use the following recent breakthrough result of Shkredov.

\smallskip

\begin{theorem}[{\cite[Cor.~1]{Shk}}]\label{thm:Shk}
    There exists an absolute constant $A$ for which the set $Q_A$ contains every prime.
\end{theorem}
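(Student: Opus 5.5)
Since the statement is quoted from Shkredov~\cite[Cor.~1]{Shk}, the paper uses it as a black box. The plan below is how I would try to derive it with the definition of $Q_A$ given above. That definition requires an \emph{exact} denominator $q=p$ with all partial quotients at most $A$, not merely a multiple of $p$.

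\textbf{Matrix reformulation and splitting.} Write $M_a=\begin{pmatrix}0&1\\1&a\end{pmatrix}$. The denominator of $[a_1,\dots,a_\ell]$ is the $(2,2)$-entry of $M_{a_1}\cdots M_{a_\ell}$. Split a word of length $\ell=k+m$ in the middle. With $x,x'$ the last two continuants of $(a_1,\dots,a_k)$ and $y,y'$ those of the reversed tail, the continuant identity gives
\[ q(a_1,\dots,a_\ell)=xy+x'y'. \]
Let $Z_A(X)$ be the set of pairs $(x',x)$ of consecutive continuants with all quotients $\le A$ and $x\asymp X$. By Hensley, $|Z_A(X)|\approx X^{2\delta_A}$, where $\delta_A\to1$ as $A\to\infty$. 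So it suffices to find $(x',x),(y',y)\in Z_A(X)$ with $xy+x'y'=p$.

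\textbf{Passing to a congruence.} Choose $X\asymp\sqrt p$ so that, after restricting to suitable subboxes, every value of $xy+x'y'$ lies in $(0,2p)$. Then $xy+x'y'\equiv0\pmod p$ forces $xy+x'y'=p$ exactly. This is how the modular statement is upgraded to the exact one. The task becomes a lower bound for
\[ \frac1p\sum_{\lambda\in\mathbb F_p}\ \sum_{(x',x),(y',y)} e_p\bigl(\lambda(xy+x'y')\bigr). \]
The $\lambda=0$ term gives $\approx X^{4\delta_A}/p\approx p^{2\delta_A-1}$, which is large when $\delta_A$ is near $1$. The remaining task is to show that the $\lambda\ne0$ terms contribute less than this main term.

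\textbf{Main obstacle: bounding the $\lambda\neq0$ terms.} I would apply Cauchy--Schwarz in $(y',y)$. This reduces the problem to bounding how often $\lambda x\equiv u$ and $\lambda x'\equiv v$ can hold simultaneously, that is, to additive and multiplicative energy estimates for the ratios $x'/x\bmod p$ coming from $Z_A$. The extra structure to exploit is that $Z_A$ is closed under concatenation, i.e.\ under multiplication by the matrices $M_a$. I would then use sum--product and growth bounds in $\mathrm{SL}_2(\mathbb F_p)$ acting on these ratios to show the ratio set cannot concentrate in the way a large nontrivial term would require. This should give a power saving $p^{-\eta}$ for each $\lambda\ne0$, which beats the main term once $A$, and hence $\delta_A$, is a large enough absolute constant.

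I expect this uniform energy bound, with no loss in $\lambda$, to be the genuine difficulty; it is the content of Shkredov's work. The other steps (splitting, choosing the scale, and the positivity and size bookkeeping that upgrades the congruence to equality) are routine.
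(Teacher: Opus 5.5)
The paper gives no proof of this statement. It imports it as a black box from \cite[Cor.~1]{Shk}, so your sketch has to stand on its own, and its decisive step fails. The reductions are fine: $q=xy+x'y'$ is the correct continuant identity, and with $x,y\le\sqrt p$ one has $0<xy+x'y'<2p$, so $xy+x'y'\equiv 0\pmod p$ forces $xy+x'y'=p$.

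The trouble is the endgame. Write $S_\lambda$ for the double sum at frequency $\lambda$, so the count is $|Z|^2/p+\frac1p\sum_{\lambda\ne0}S_\lambda$ with $|Z|\approx p^{\delta_A}$.

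First, bounded nonzero $\lambda$ give no saving at all. For such $\lambda$, $S_\lambda/|Z|^2$ is essentially a Fourier coefficient of the distribution of $(xy+x'y')/p\in(0,2)$ taken mod $1$. That distribution is not uniform, so in general $|S_\lambda|\asymp|Z|^2$, and these terms must be absorbed into an archimedean main term.

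Second, and fatally, take the remaining frequencies. A bound $|S_\lambda|\le|Z|^2p^{-\eta}$, summed over $p-1$ frequencies and divided by $p$, leaves an error $|Z|^2p^{-\eta}$ against the main term $|Z|^2p^{-1}$. So you need $\eta>1$, and this requirement does not relax as $A$ grows. Since $S_\lambda$ is a sum of $|Z|^2$ unimodular terms, square-root cancellation corresponds to $\eta=\delta_A<1$. You would need $|S_\lambda|<p^{2\delta_A-1}<|Z|$, i.e.\ better than square-root cancellation, which fails already for random sets of this size. This is the same obstruction that restricts circle-method arguments of Bourgain--Kontorovich type \cite{bourgain14} to mean-square control over the target. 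That is why they yield density-one results such as Theorem~\ref{thm:Huang}.

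Your middle step does not deliver the structure you want either. Cauchy--Schwarz in $(y',y)$, with that variable completed to $\mathbb{F}_p^2$, gives only $|S_\lambda|\le p|Z|$. This is worse than the trivial bound $|Z|^2$, because $|Z|<p$. Moreover, for fixed $\lambda\ne0$ the congruences $\lambda x\equiv u$, $\lambda x'\equiv v$ merely determine $(x,x')$, so no ratio-energy problem arises.

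What homogeneity actually yields is an exact reformulation. Since the denominators are below $\sqrt p$, distinct ratios $x'/x$ remain distinct mod $p$. Hence the number of solutions of $xy+x'y'\equiv 0\pmod p$ equals $|R\cap(-R^{-1})|$, where $R=\{x'/x \bmod p\}\subset\mathbb{P}^1(\mathbb{F}_p)$ and $|R|\approx p^{\delta_A}<p$. That is essentially the original problem for balanced splits. It lies below the pigeonhole threshold, where size or Fourier-uniformity information alone cannot force an intersection, and per-frequency bounds of any strength up to square-root cancellation do not suffice. Whatever new input \cite{Shk} supplies has to act precisely here. As written, your proposal reformulates the statement but does not prove it, even granting the power savings you hope for.
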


\medskip

\section{Trees and the proof of Theorem~\ref{thm:tre}}
\label{sec:tre}

A \emph{marked tree} $(T,v)$ is an ordered pair consisting of a tree $T$ and a root vertex $v$ of $T$.
We denote by $a(T,v)$ the number of independent sets of $T$ containing $v$, and $b(T,v)$ the number of independent sets of $T$ not containing $v$.
We have two operations that generate all marked trees when combined together.
The first operation is the \emph{extension} operation, where given a marked tree $(T,v)$, the new tree $T'$ is obtained by adding a new root vertex $v'$ that is adjacent to $v$, and outputting the new marked tree $(T',v')$. It follows that 
\[  a(T',v') \ \colonequals \  b(T,v), \qquad b(T',v') \ = \  a(T,v) + b(T,v).  \]
The second operation is the \emph{product} operation, where given marked trees $(T_1,v_1)$ and $(T_2,v_2)$, the new tree $T_3$ is obtained by taking the disjoint union of $T_1$ and $T_2$, followed by identifying $v_1$ and $v_2$, and declaring the new vertex to be $v_3$.
It follows that 
\begin{align*}
    a(T_3,v_3) \ = \  a(T_1,v_1)  a(T_2,v_2), \qquad b(T_3,v_3) \ = \ b(T_1,v_1) b(T_2,v_2).
\end{align*}
These operations are standard constructions on trees; and our application here is particularly inspired by \cite{law10}.

    \smallskip

Instead of keeping track of $(a,b)\colonequals(a(T,v),b(T,v))$, it is sometimes more convenient to keep track of two parameters $(g,r)\colonequals(g(T,v),r(T,v))$,
where
\[ g \ \colonequals \  \gcd(a,b), \qquad r(T,v) \ \colonequals \ \frac{a}{b}. \]
Note that $g$ is a positive integer, while $r$ is a rational number in $(0,1]$.
Note that we can recover $(a,b)$ from $(g,r)$ by 
\[ a \ \colonequals \   g \times (\text{numerator of } r(T,v)), \qquad    b \ \colonequals \  g \times (\text{denominator of } r(T,v)). \]
where  $r$ is written in the reduced form.

Let $\cS$ be the set of pairs $(g,r)$ such that there exists a marked tree $(T,v)$ for which $g=g(T,v)$ and $r=r(T,v)$.

\smallskip

\begin{lemma}\label{lem:basic-rec-a}
    Suppose that  $(g,r) \in \cS$ 
    and $(g',r') \in \cS$, where $r'=1/k$ for some positive integer $k$.
    Then the pair $(g'',r'')$ is contained in $ \cS$, where 
    \[  g'' \ \colonequals \  g g'^2 k,   \qquad r'' \ \colonequals \   \frac{1}{k+ r}. \]
\end{lemma}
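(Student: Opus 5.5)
The plan is to realize the target pair by an explicit marked tree built from $(T,v)$ and two copies of $(T',v')$, using only the product and extension operations. At the end I will compute $(a'',b'')$ and read off $g''$ and $r''$.

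First fix notation. Since $(g,r)\in\cS$, choose a marked tree $(T,v)$ realizing it. Write $r=p/q$ in lowest terms, so that $a(T,v)=gp$ and $b(T,v)=gq$. Likewise choose $(T',v')$ realizing $(g',1/k)$, so that $a(T',v')=g'$ and $b(T',v')=g'k$.

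The heuristic that guides the construction is the following. For any tree rooted at $w$, the ratio $b/a$ at the root equals $\prod_c(1+r_c)$ over the children $c$ of $w$. A product at the root multiplies the ratio $a/b$ by the ratio of the factor. We want $b''/a''=k+r=k(1+r/k)$. So it suffices to do three steps:
\begin{enumerate}
\item Produce a marked tree with ratio $r/k$. This is the product of $(T,v)$ with $(T',v')$.
\item Extend it. This gives ratio $1/(1+r/k)=k/(k+r)$.
\item Take the product with a second copy of $(T',v')$. This multiplies the ratio by $1/k$, giving $1/(k+r)$.
\end{enumerate}

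Now track the actual counts, not just the ratios. The product in step~1 has $(a,b)=(gp\,g',\ gq\,g'k)$. The extension in step~2 gives $(a,b)=\bigl(gg'kq,\ gg'(p+kq)\bigr)$. The final product in step~3 gives
\[ a'' \ = \ g g'^2 k\, q, \qquad b'' \ = \ g g'^2 k\,(p+kq). \]
The resulting graph is a tree at every stage, since both operations preserve being a tree.

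It remains to identify $g''$ and $r''$. We have $\gcd(q,p+kq)=\gcd(q,p)=1$. Hence $\gcd(a'',b'')=gg'^2k$, and $a''/b''=q/(p+kq)=1/(k+p/q)=1/(k+r)$, which is exactly the claimed pair. Also $r''\in(0,1]$, as required.

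The main obstacle is finding the right order of operations. Most natural orders produce the wrong ratio. For example, extending $(T,v)$ first and then taking a product yields a factor $a+b$ in the denominator instead of $a+kb$. The child-product heuristic is what pins down the order above. Once the construction is fixed, the gcd computation is routine.
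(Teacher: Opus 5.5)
Your proposal is correct and follows essentially the same route as the paper: take the product with $(T',v')$, then extend, then take the product with a second copy of $(T',v')$. Your gcd computation via $\gcd(q,p+kq)=\gcd(q,p)=1$ is also the paper's, written in terms of $(p,q)$ instead of $(a,b)$.
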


\smallskip

\begin{proof}
The lemma follows from applying the product operation to $(T,v)$ and $(T',v')$ (corresponding to $(g,r)$ and $(g',r')$, respectively), followed by the extension operation, then followed by product operation with $(T',v')$.
Specifically, let $a,b,a',b'$ be the corresponding numbers of the two trees. The product operation gives a tree with marked vertex $v$ and pair $(aa',bb')$. The extension gives another tree $(T'',v'')$ with $a(T'',v'')=bb'$ and $b(T'',v'')=aa'+bb'$. The final product operation gives a tree $(T_0,w)$ with $a(T_0,w) = bb'a' =bk(g')^2$ and $b(T_0,w) = (aa'+bb')b'= (ag'+kbg')kg'$. The ratio is then
$r(T_0,w) = \frac{bkg'g'}{k(a+kb)g'g'} = \frac{b}{a+kb}=\frac{1}{r+k}$. The gcd is then
$g(T_0,w) = \gcd(bkg'g', kg'g'(a+kb))=k(g')^2\gcd(b,a)=kg(g')^2$. 
\end{proof}

\smallskip

\begin{cor}\label{cor:basic-rec-a}
We have 
    \[ \cS \  \supset  \  \left \{(1,1), \ \Big(1,\frac{1}{2}\Big)\right\},    \]
    and the corresponding trees have a number of vertices equal to $1$ and $2$ respectively.
\end{cor}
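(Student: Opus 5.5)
The plan is to exhibit the two marked trees explicitly and compute their invariants $a$, $b$, $g$ and $r$ directly from the definitions.

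For the pair $(1,1)$, take $T$ to be the single-vertex tree with root $v$. Its independent sets are $\emptyset$ and $\{v\}$. Hence $a(T,v)=1$ and $b(T,v)=1$. This gives $g=\gcd(1,1)=1$ and $r=a/b=1$, so $(1,1)\in\cS$, realized by a tree with $1$ vertex.

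For the pair $(1,\frac12)$, apply the extension operation to the single-vertex marked tree above. This produces the path on two vertices $v,v'$, rooted at $v'$. By the extension recurrence, $a(T',v')=b(T,v)=1$ and $b(T',v')=a(T,v)+b(T,v)=2$. As a sanity check, we can list the independent sets directly: they are $\emptyset$, $\{v\}$ and $\{v'\}$, and exactly one of them contains $v'$. Thus $g=\gcd(1,2)=1$ and $r=\frac12$, so $(1,\frac12)\in\cS$, realized by a tree with $2$ vertices.

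There is no real obstacle here. The only point needing care is the convention that the empty set counts as an independent set, so that $b\ge 1$ and $r\in(0,1]$ as stated.
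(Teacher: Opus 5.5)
Your proposal is correct and matches the paper's proof, which takes the one-vertex tree and the two-vertex path rooted at an endpoint, giving $(a,b)=(1,1)$ and $(1,2)$ respectively. Building the path by the extension operation is just another way of describing the same tree.
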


\smallskip

\begin{proof}
    This follows by choosing the tree with one and two vertices, respectively.
\end{proof}

\smallskip

Let $\overline{\cS} \subseteq \cS$ be the subset generated by repeatedly
applying the operation to trees described by Lemma~\ref{lem:basic-rec-a}, where at
each step $(g',r')$ is chosen to be one of the two ordered pairs given
in Corollary~\ref{cor:basic-rec-a}.

\smallskip

\begin{lemma}\label{lem:basic-rec-b}
    For every $(g,r) \in \overline{\cS}$,
    \[  g \ \leq \ q,  \]
    where $(p,q)$  are the unique coprime positive integers such that $r=\tfrac{p}{q}$.
\end{lemma}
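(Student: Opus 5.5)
The plan is to argue by induction on the number of applications of the operation of Lemma~\ref{lem:basic-rec-a} used to produce $(g,r)\in\overline{\cS}$. I take the generation of $\overline{\cS}$ to start from the two pairs of Corollary~\ref{cor:basic-rec-a}, so those are the base cases. For $(1,1)$ we have $p=q=1$ and $g=1\le 1$. For $(1,\tfrac12)$ we have $q=2$ and $g=1\le 2$. Both satisfy the claim.

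For the inductive step, let $(g,r)\in\overline{\cS}$ with $r=p/q$ in lowest terms, and assume $g\le q$. Each step pairs $(g,r)$ with $(g',r')\in\{(1,1),(1,\tfrac12)\}$, so $g'=1$ and $k\in\{1,2\}$. Lemma~\ref{lem:basic-rec-a} then gives
\[
g''=gk,\qquad r''=\frac{1}{k+p/q}=\frac{q}{kq+p}.
\]

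The one point needing care is identifying the reduced denominator of $r''$. Since
\[
\gcd(q,kq+p)=\gcd(q,p)=1,
\]
the fraction $q/(kq+p)$ is already in lowest terms. So the new numerator and denominator are $p''=q$ and $q''=kq+p$.

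It remains to compare. By the inductive hypothesis $g\le q$, and since $k\ge 1$ and $p\ge 1$,
\[
g''=gk\le kq<kq+p=q''.
\]
This closes the induction.

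I do not expect any real obstacle. The only things to check are that the reduced form of $r''$ really has denominator $kq+p$ (the gcd computation above) and that $g'=1$ for both admissible choices of $(g',r')$. The latter ensures the factor $g'^2$ in Lemma~\ref{lem:basic-rec-a} does not inflate $g''$.
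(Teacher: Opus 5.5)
Your proof is correct and follows essentially the same route as the paper: induction starting from the two base pairs, with the key estimate $g''=g(g')^2k=gk\le qk\le kq+p=q''$. You treat both choices $k\in\{1,2\}$ uniformly and check explicitly that $q/(kq+p)$ is in lowest terms; the paper leaves both of these implicit, writing only that ``the other case proceeds similarly.''
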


\smallskip

\begin{proof}
    First note that two ordered pairs in Corollary~\ref{cor:basic-rec-a} clearly satisfy the conclusion of the lemma.
    Now, let  $(g,r)$ be any element satisfying  the conclusion of the lemma, let $(g',r')\in \left \{(1,1), \ \Big(1,\frac{1}{2}\Big)\right\}$ 
    and let $(g'',r'')$ be the resulting element from the operation in Lemma~\ref{lem:basic-rec-a}.
We now show that $(g'',r'')$ also satisfies the conclusion of the lemma.

Let $r=\tfrac{p}{q}$, $r'=\tfrac{1}{k}$,  and $r''=\tfrac{p''}{q''}$.
Suppose that $(g',r')=(1,\tfrac{1}{2})$. From Lemma \ref{lem:basic-rec-a}, $r''=1/(k+r)=1/(k+p/q)=q/(kq+p)$, so $q''=kq+p$.  We then have
\begin{equation}\label{eq:not-optimal}
     g'' \ = \  g(g')^2k \ \leq \  q (g')^2 k  \   = \ q k 
      \leq  \ (kq+p) \ = \ q'',
\end{equation}
    which proves the claim in this case.
    The other case proceeds similarly.

The conclusion of the lemma now follows by induction, as desired.
\end{proof}

We now proceed to prove Theorem~\ref{thm:tre}.
Let $N$ be any sufficiently large positive integer, 
and let  $\tau\colonequals1+\sqrt{2}$.

Set $\ell\colonequals\lfloor  ({\log_{\tau}(N/2)-4})/{2}\rfloor$.
Let $a_1,\ldots, a_\ell$ be arbitrary elements  of $\{1,2\}$.
Note that there are $2^\ell$ many such choices.
We define $P\colonequals P(a_1,\ldots,a_\ell)$ and $Q\colonequals Q(a_1,\ldots,a_\ell)$
to be the unique coprime positive integers such that 
\begin{equation}\label{eq:CD}
    \frac{P}{Q} \ = \ 
    \begin{cases}
        [1,a_1,\ldots,a_\ell] &\text{ if  \ the denominator of $[1,a_1,\ldots,a_\ell]$ is odd};\\
        [1,1,a_1,\ldots,a_\ell] & \text{ if  \ the denominator of $[1,a_1,\ldots,a_\ell]$ is even}
    \end{cases}.
\end{equation}
Note that in either case $Q$ is an odd number.
Also note that $Q$ is maximized when all $a_i$'s are equal to 2, since $P$ and $Q$ are uniquely expressed as polynomials in the $a_i$s with positive coefficients. It then follows from a direct calculation using the matrix product interpretation and that the eigenvalues of the matrix $\begin{pmatrix} 0 & 1\\ 1 & 2\end{pmatrix}$ are $\tau=1+\sqrt{2}$ and $1-\sqrt{2}=-1/\tau$, that
\[ Q \ \leq \ \frac{\tau^{\ell+3}- (-1/\tau)^{\ell+3}}{\tau+1/\tau} \ \leq \ \tau^{\ell+2}. \]

Now note that,
for a given pair of coprime integers $(P, Q)$, at most two tuples $(a_1, \ldots, a_\ell) \in \{1, 2\}^\ell$ satisfy \eqref{eq:CD}.
Hence we conclude that there are at least 
 $\tfrac{2^{\ell}}{2}$
  many distinct pairs of coprime integers $(P,Q)$
such that there exists a tuple $(a_1, \ldots, a_\ell) \in \{1, 2\}^\ell$  satisfying \eqref{eq:CD}.

Now, let $(P,Q)$ be any such pair of integers, and let $(a_1,\ldots,a_\ell)$ be the corresponding tuple.
If there are two of them, then choose the smaller one in lexicographic order.
Let $(T,v)$ be the tree constructed via Lemma \ref{lem:basic-rec-a} and Cor \ref{cor:basic-rec-a} such that 
\[ r(T,v) \ = \  
    \begin{cases}
        [a_1,\ldots,a_\ell] &\text{ if  \ the denominator of $[1,a_1,\ldots,a_\ell]$ is odd};\\
        [1,a_1,\ldots,a_\ell] & \text{ if  \ the denominator of $[1,a_1,\ldots,a_\ell]$ is even}
    \end{cases}.
\]
It follows that $T$ exists by the recursion in Lemma~\ref{lem:basic-rec-a} (setting $r'=1/a_{i}$ in the $i^{th}$ application of the lemma).
By definition of $r(T,v)$ and \eqref{eq:CD}, we have $P/Q = 1/(1+r(T,v))$.  Write $r(T,v)=p/q$ where $\mathrm{gcd}(p,q)=1$.  Then $P/Q = q/(p+q)$ and $\mathrm{gcd}(q,p+q)=\mathrm{gcd}(q,p)=1$, so that
\[ Q \ = \  \text{numerator of } r(T,v) \ + \    \text{denominator of } r(T,v) \ = \ p+q.  \]  
This implies that   
\begin{equation}\label{eq:decomp}
i(T)\ = \ a(T,v)+b(T,v) \ = \  g(T,v) p + g(T,v)q \ = \ g(T,v) Q.
\end{equation}
It follows from the definition of $\overline{\cS}$
that $g(T,v)$ is a power of 2, and 
Lemma \ref{lem:basic-rec-b} implies that
\begin{equation*}
      g(T,v) \ \leq \ q \ \leq  \ Q.
\end{equation*}
Hence we have, by definition of $\ell$,
\[ i(T) \ = \  g(T,v) Q \ \leq  \ Q^{2} \ \leq \ \tau^{2\ell+4} \ \leq \ N/2 \  < \ N.\]
Now let $T'$ be the tree obtained from $T$ by adding a vertex $v'$ that is adjacent to $v$.
By the same argument as before, we have
\begin{equation}\label{eq:decomp-2}
i(T')\ = \ a(T,v)+2b(T,v) \ = \  g(T,v) p + 2g(T,v)q \ = \ g(T,v)P+ g(T,v) Q.
\end{equation}
Hence we have 
\[ i(T') \ \leq  \    2g(T,v) Q \ \leq  \ 2Q^{2} \ \leq \ 2\tau^{2\ell+4} \ \leq \ N.\]
%
Now note that we can recover $(P,Q)$ from $(i(T),i(T'))$, using the following algorithm.
First,  we  can recover the value of $Q$ by taking $Q$ to be the largest odd divisor  of $i(T)$.
Indeed, this is a consequence of \eqref{eq:decomp},
together with the facts that  $Q$ is odd
and $g(T,v)$ is a power of 2.
Then, we can recover the value of $g(T,v)$ by taking $g(T,v)$ to be the largest power of $2$ dividing $i(T)$.
Then, we can recover the value of $P$ by the formula 
\begin{align*}
    P \ = \ \frac{i(T')-i(T)}{g(T,v)}, 
\end{align*}
which follows from \eqref{eq:decomp} and \eqref{eq:decomp-2}.

Combining all the observations above, we conclude that there are at least $m\colonequals2^{\ell}/2$ many trees $T_1,\ldots, T_m$,
such that the vectors $(i(T_j), i(T_j'))$ ($j\in [m]$) are all distinct, and such that 
\[   i(T_j) \leq i(T_j') \leq N.\]
So we have $m$ distinct points in $\{1,\ldots,N\}^2$, and if the distinct $x$-coordinates appearing are $m_1$, and $y$-coordinates are $m_2$, then $m \leq m_1 m_2$, so $m_i \geq \sqrt{m}$ for at least one $i\in \{1,2\}$. But $m_1=|\{i(T_j),j=1,\ldots,m\}|$ and $m_2=|\{i(T'_j), j=1,\ldots,m\}|$ and this implies 
\begin{equation}\label{eq:last}
     | \Itre \cap \{1,\ldots,N\}| \ \geq \  \sqrt{m}.
\end{equation}

Finally, note that by \eqref{eq:last} and by  the definition of $\ell$, 
we have 
\begin{equation*}
     | \Itre \cap \{1,\ldots,N\}| \ \geq \  \sqrt{m}  \ \geq \ \Omega(N^{(\log_\tau 2)/4}),
\end{equation*}

for which the theorem follows.
\qed

\medskip

\section{Planar graphs and proof of Theorem~\ref{thm:pla}}
\label{sec:pla}

Theorem~\ref{thm:pla} is a direct consequence of Theorem~\ref{thm:Huang} and the following proposition.
Here we denote by  $\phi:=\frac{1+\sqrt{5}}{2}$ the \emph{golden ratio}.

\smallskip

\begin{prop}\label{planarprop}
For every $q\in Q_{5}$, there exists a connected planar graph $G$ such that $q$ is equal to the number of independent sets in $G$ and $|V(G)| \leq  5 \log_{\phi} q$.
\end{prop}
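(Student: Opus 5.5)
The plan is to encode one step of the continued fraction algorithm as a planar gadget acting on a rooted graph. This is the planar analogue of Lemma~\ref{lem:basic-rec-a}, but it should use unimodular steps so that no gcd factor $g$ ever appears. For a connected planar graph $G$ with a marked vertex $v$, let $a=a(G,v)$ and $b=b(G,v)$ be the numbers of independent sets containing and not containing $v$, as in Section~\ref{sec:tre}.

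\textbf{The gadget.} Fix $k\in\{1,\dots,5\}$ and a graph $H_k$ with $i(H_k)=k$ which is a disjoint union of paths. Concretely, take $H_1=\emptyset$, $H_2=P_1$, $H_3=P_2$, $H_4=P_1\sqcup P_1$ and $H_5=P_3$; here $P_m$ is the path on $m$ vertices and $i(P_m)$ is a Fibonacci number. Form $G'$ from $G$ by adding a new root $v'$ adjacent to $v$, together with a copy of $H_k$ all of whose vertices are joined to both $v$ and $v'$. Counting independent sets gives:
\begin{itemize}
\item if $v'$ is in the set, then $v$ and all of $H_k$ are excluded, giving $a'=b$;
\item if $v'$ is not in the set and $v$ is, then $H_k$ is excluded, giving $a$;
\item if neither $v'$ nor $v$ is in the set, then we get $b\cdot i(H_k)=kb$.
\end{itemize}
Hence $(a',b')=(b,\,a+kb)$, so that $a'/b'=1/(k+a/b)$ and $\gcd(a',b')=\gcd(a,b)$. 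For planarity, note that $G$ is attached only at the single vertex $v$. The new part is two adjacent apex vertices over a linear forest: draw the paths of $H_k$ on a horizontal line, put $v$ above and $v'$ below, and route the edge $vv'$ around the end of the line. The graph $G$ itself can be drawn inside a face at $v$. Connectivity is clear. The gadget adds at most $4$ vertices, namely $v'$ plus at most $3$ vertices of $H_k$.

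\textbf{Assembling.} Let $q\in Q_5$ with $q>1$ and $p/q=[a_1,\dots,a_\ell]$, all $a_i\le 5$. Using $[\dots,a_\ell]=[\dots,a_\ell-1,1]$ when $a_\ell\ge2$, I may assume $a_\ell=1$, at the cost of one extra quotient. Start from the single vertex, which has $(a,b)=(1,1)$ and ratio $1=[1]$. Then apply the gadget successively with $k=a_{\ell-1},\dots,a_1$. By induction the final rooted graph $(G,v')$ has $a/b=[a_1,\dots,a_\ell]=p/q$ with $\gcd(a,b)=1$, hence $b=q$. Since $b(G,v')=i(G-v')$, the graph $G-v'$ has exactly $q$ independent sets. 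It is still connected and planar: every gadget vertex other than $v'$ is adjacent to $v$, and deleting a vertex preserves planarity. The cases $q=1$ (the empty graph) and small $q$ are checked directly.

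\textbf{Vertex count (the only quantitative step).} The graph has at most $1+4(\ell-1)$ vertices. By the matrix-product description of continuants, the denominator of $[a_1,\dots,a_\ell]$ is at least the Fibonacci number $F_{\ell+1}\ge\phi^{\ell-1}$, since each $a_i\ge1$. This gives $\ell\le\log_\phi q+1$ and so $|V|\le 4\log_\phi q+1\le 5\log_\phi q$ once $q$ is moderately large, with the finitely many small $q$ checked by hand. I expect the main obstacle to be the gadget design rather than this count: one needs a planar structure producing the multiplier exactly $k\le 5$ while keeping the step unimodular, and a linear forest hanging between two adjacent apices is what achieves this. Cliques would fail here, since $K_4$ plus the two apices is not planar.
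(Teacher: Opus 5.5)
Your proposal is correct and is essentially the paper's proof. It uses the same gadget: two adjacent vertices joined to a small graph whose independent-set count is $k$, namely $\emptyset$, $K_1$, $K_2$, $2K_1$, $P_3$ for $k=1,\dots,5$. It also uses the same recursion $(a,b)\mapsto(b,a+kb)$ from a single vertex, and the same final step of deleting the last marked vertex to get a connected planar graph with $q$ independent sets. Your rewriting $[\dots,a_\ell]=[\dots,a_\ell-1,1]$, with the gluings applied in the order $a_{\ell-1},\dots,a_1$, pins down the starting pair $(1,1)$ and the gluing order more carefully than the paper's one-line description, which leaves both implicit.
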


\smallskip

\begin{proof}
Let $q \in Q_5$, and let $\tfrac{p}{q}=[a_1,\ldots,a_\ell]$ be the corresponding continued fraction expansion.
Note that $a_1,\ldots,a_\ell \leq 5$ by definition of $Q_{5}$.  For any $a\in\R$, denote 
\begin{equation}\label{eq:ta}
T_{a}\colonequals \begin{pmatrix}0 & 1 \\ 1 & a\end{pmatrix}.
\end{equation}
It is easy to see by induction that$$
\begin{pmatrix}* & *\\ p & q\end{pmatrix}=
T_{a_{\ell}}T_{a_{\ell-1}}\cdots T_{a_{1}},
$$
directly associating the matrix product with
 the continued fraction expansion
$$\frac{p}{q}=[a_1, a_2, \ldots, a_\ell].$$
(Here * denotes entries of the matrix that are ignored.)

A \emph{marked planar graph} $(G, v)$ is a pair of a connected planar graph $G$ 
    and a marked vertex $v$ of $G$.
    Just like with the tree case, let $a(G, v)$ be the number of 
    independent sets of $G$ containing $v$, and $b(G, v)$ be the number 
    of independent sets not containing $v$.
We denote by $\cZ$ the set of pairs $(a,b)$ such that there exists a planar marked graph $(G,v)$  with $a=a(G,v)$ and $b=b(G,v)$.
Note that $(1,1) \in \cZ$, by considering the marked planar graph consisting of a single vertex. 
Moreover, $(1,k) \in \cZ$ for $k=2$ by considering the graph  $P_2$ (path with 2 vertices, one of which is $v$), $k=3$ using $K_3$, $k=4$ using $P_3$ with $v$ the middle vertex and $k=5$ using the graph $P_3 \cup\{v\}$ connecting every vertex of $P_3$ to $v$ ($K_4$ minus an edge).  

We introduce the following gluing operation on marked graphs.
Let $(G, v)$ and $(G', v')$ be two marked planar graphs. Suppose further 
that $G'$ contains a vertex $w'$ that is distinct from $v'$, and both $v'$ and $w'$ are adjacent to each other and every other vertex in $G'$.  
We define $(G'', v'')$ to be the marked graph obtained by taking 
the disjoint union of $G$ and $G'$, identifying $v$ with $v'$, and 
setting the marked vertex to be $v'' = w'$.
See Figure~\ref{fig:planar} for an illustration.
Note that $G''-v''$ is a connected planar graph by construction.

\begin{figure}[ht]
\centering
\includegraphics[width=.6\textwidth]{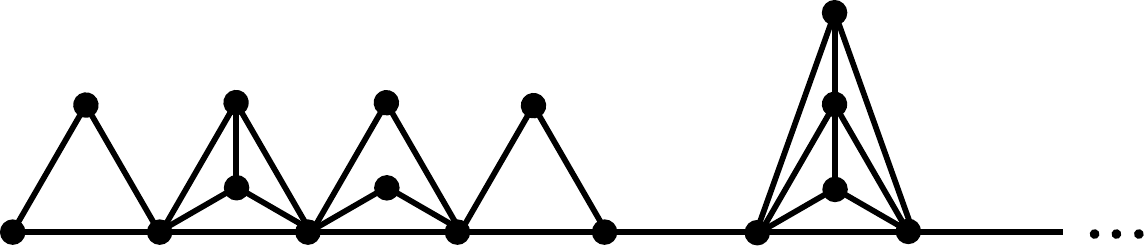}
\caption{A planar graph obtained by consecutive gluing operations in Proposition~\ref{planarprop}. Each ``triangle'' is an instance of a $G'$ graph and the marked vertices are on the bottom line. The corresponding continued fraction is $[2,3,4,2,1,5,\ldots]$.}
\label{fig:planar}
\end{figure}

It follows readily that $(G'',v'')$ is  a marked planar graph satisfying 
\begin{align*}
    a(G'',v'') \ = \ b(G,v), \qquad b(G'',v'') \ =  \ a(G,v) + i(G'-v'-w') b(G,v).
\end{align*}
Therefore it follows that 
\begin{align}\label{eq:tkapp}
    \text{ if } \binom{a}{b} \in \cZ, \ \text{ then } T_{k} \binom{a}{b} \stackrel{\eqref{eq:ta}}{=} \binom{b}{a+kb} \in \cZ,
\end{align}
where $k\colonequals i(G'-v'-w')$.
It therefore remains to construct a family of marked planar graphs $(G', v', w')$ satisfying $i(G' - v' - w') = k$ for each $k \in \{1, \dots, 5\}$.

 \textbf{Case $k \in \{1, 2, 3\}$:} Let $G'$ be the complete 
    graph $K_{k+1}$, and let $v', w' \in V(G')$ be any two distinct 
    vertices. Then $G' - v'- w'$ is isomorphic to $ K_{k-1}$, which 
    implies $i(G' -v'-w') = k$.

 \textbf{Case $k = 4$:} Let $G'$ be the graph obtained by 
    removing one edge from $K_4$, and let $v', w'$ be the two 
    vertices of degree 3. The subgraph $G' - v'-w'$ 
    consists of two isolated vertices, so $i(G'-v'-w') 
    = 4$.

\textbf{Case $k = 5$:}  Let $G'$ be the graph $H$ in Figure~\ref{fig:H}, and let $v'$ and $w'$ be any two vertices of degree 4.
The subgraph $G'-v'-w'$ is isomorphic to the path graph of 3 vertices, so $i(G'-v'-w') 
    = 5$.
    
\begin{figure}[ht]
\centering
\includegraphics[width=.15\textwidth]{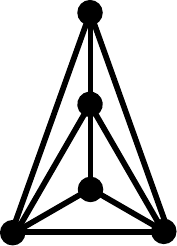}
\caption{The graph $H$ for the case $k=5$ in Proposition~\ref{planarprop}.}
\label{fig:H}
\end{figure}

The final constructed graph $G$ starts with a single vertex (corresponding to $(1,1)\in\cZ$) and then performs $\ell$ gluing operations corresponding to $k$ taking the values $a_{1},\ldots,a_{\ell}$ in \eqref{eq:tkapp}.  Also observe that $|V(G')| \leq 5$ in each of the three cases described above.  Since the Euclidean algorithm terminates in at most $\ell \leq \log_{\phi} q$ steps for any given $q$, our construction yields a graph $G-v$ with $i(G-v)=q$ that satisfies the vertex bound $|V(G-v)| \leq 5 \log_{\phi} q$.
This completes the proof of the proposition.
\end{proof}

\medskip


\section{Graphs with bounded average degree and proof of Theorem~\ref{thm:phase} }
\label{sec:phase}
We split the proof into three parts depending on the average degree $d$ and proving the lower and upper bounds respectively.

\subsection{Proof of the lower bound from Theorem~\ref{thm:phase}}
We will prove the following result.

\smallskip

\begin{lemma}
If $d \in (0,2)$, then there exists a constant $c_1(d)>0$  such that 
    \[  |\cI_d \, \cap \,  \{1,\ldots, N\}|  \quad \gg \quad N^{c_1(d)}. \]    
\end{lemma}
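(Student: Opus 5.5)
The plan is to take the connected planar graphs from Proposition~\ref{planarprop}, which have few vertices, and pad them with isolated vertices. Padding pushes the average degree below $d$ at the cost of multiplying the independent set count by a power of $2$. Since $d<2$ is fixed, the padding needs only logarithmically many vertices, so we lose only a polynomial factor in the range of values.

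Fix $d\in(0,2)$ and a parameter $M$, to be chosen as a power of $N$. For each $q\in Q_5\cap\{1,\dots,M\}$, Proposition~\ref{planarprop} gives a connected planar graph $G_q$ with $i(G_q)=q$ and $|V(G_q)|\le 5\log_\phi q\le 5\log_\phi M$. Since $G_q$ is planar, $|E(G_q)|\le 3|V(G_q)|\le 15\log_\phi M$; this also covers the trivial small cases. Let $s\colonequals\lceil (30/d)\log_\phi M\rceil$, a single value used for every $q\le M$. Let $H_q$ be the disjoint union of $G_q$ with $s$ isolated vertices; disconnected graphs are allowed in $\cI_d$. Then
\[
i(H_q)=q\,2^{s},\qquad d(H_q)=\frac{2|E(G_q)|}{|V(G_q)|+s}\le \frac{30\log_\phi M}{s}\le d,
\]
so $q2^s\in\cI_d$.

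Because $s$ does not depend on $q$, the map $q\mapsto q2^s$ is injective. This gives at least $|Q_5\cap\{1,\dots,M\}|$ distinct elements of $\cI_d$, each at most $M2^s\le 2M\cdot M^{(30/d)\log_\phi 2}$. By Theorem~\ref{thm:Huang}, $Q_5$ has density $1$, so $|Q_5\cap\{1,\dots,M\}|\ge M/2$ for large $M$. Set $\beta\colonequals 1+(30/d)\log_\phi 2$ and choose $M\colonequals\lfloor (N/2)^{1/\beta}\rfloor$, so that $M2^s\le N$. We conclude
\[
|\cI_d\cap\{1,\dots,N\}|\gg N^{1/\beta},
\]
which is the claim with $c_1(d)=1/\beta$.

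The only real care point is to keep the padding size $s$ uniform in $q$, so that injectivity is immediate and no overlaps among the values $q2^{s(q)}$ need to be controlled. The main input is the density statement of Theorem~\ref{thm:Huang} together with the logarithmic vertex bound of Proposition~\ref{planarprop}. Density $1$ is in fact more than needed: any positive-proportion lower bound on $|Q_5\cap\{1,\dots,M\}|$, or even a polynomial lower bound, would suffice.
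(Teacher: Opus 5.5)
Your proof is correct and is essentially the paper's argument. You realize each $q\in Q_5\cap\{1,\dots,M\}$ by a connected planar graph via Proposition~\ref{planarprop} and Theorem~\ref{thm:Huang}, then pad with a number of isolated vertices that is uniform in $q$, using Euler's edge bound to keep the average degree at most $d$. The paper fixes the padding at $\lceil\lfloor\log_2 N\rfloor/2\rceil$ vertices with $M=N^{d/(20(6-d))}$, whereas you take $s=\lceil(30/d)\log_\phi M\rceil$ and solve for $M$ in terms of $N$; this difference is only cosmetic and still gives a valid explicit constant $c_1(d)=1/\beta$.
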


\smallskip

\begin{proof}
Let $N$ be a sufficiently large integer.
Set $k\colonequals \lfloor \log_2 N\rfloor$.
It follows from Proposition~\ref{planarprop} and Theorem \ref{thm:Huang} that, for almost all integers $K \in \{1,\ldots,M\}$ 
there exists a connected planar graph $G\colonequals G(K)\colonequals(V,E)$ such that $i(G)=K$. Let $M=N^{\frac{d}{20(6-d)}}$.
We then have a bound on the number of vertices as
\begin{equation}\label{eq:charlie}
     |V| \ \leq \ 5 \log_{\phi} K  \ < \ 10 \log_2 K  \leq \  \lceil k/2 \rceil \frac{d}{6-d}. 
\end{equation}

Let $G'$ be the graph obtained  by adding $\lceil k/2 \rceil$  isolated vertices into  $G$.
Then 
\[ i(G')  \ = \  2^{\lceil k/2 \rceil} i(G) \ = \  2^{\lceil k/2 \rceil} K. \]
Note that the average degree of $G'$ is given by 
\begin{align*}
    d(G') \ = \  \frac{2|E|}{|V| + \lceil k/2 \rceil}
    \ < \ \frac{6|V|}{|V|+ \lceil k/2 \rceil} \ \leq 
      \ d,
\end{align*}
where the first inequality follows from Euler's formula (i.e., $|E| \le 3|V|-6$ for connected planar graphs),
and the second inequality follows from \eqref{eq:charlie}.  Also note that, by definition of $k,K$ and using $d\leq2$,
\[ i(G') \ = \ 2^{\lceil k/2 \rceil} K \ 
\leq \  (2\sqrt{N}) (N^{\frac{1}{40}}) \ < \ N,\]
for sufficiently large $N$.
Since the choice of $K$ is arbitrary, the lemma now follows by taking $c_1(d) \colonequals  \frac{d}{20(6-d)}$.
    \end{proof}

\medskip

\subsection{Proof of the upper bound in Theorem~\ref{thm:phase} part (a)}
The statement would follow from an elementary number theoretic estimate.

\begin{lemma}\label{lemma:factors}
   Let $N$ be a large integer, let $c\in (0,1)$ and let $k \geq c \log_2(N)$. Then the set of integers $\leq N$ which can be factored as a product of $k$ integers has upper growth exponent $<1$, i.e.
   $$|\{ x \leq N: x=m_1\ldots m_k \text{ for some }m_i\geq 2, m_i\in\mathbb{N}\}| \ll N^\epsilon$$
   for any $\epsilon \in (1-c,1)$. 
\end{lemma}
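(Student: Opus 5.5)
The plan is to reduce the count to integers with many prime factors and then apply Rankin's trick. If $x=m_1\cdots m_k$ with every $m_i\ge 2$, then each $m_i$ contributes at least one prime factor, so $\Omega(x)\ge k$. Here $\Omega$ counts prime factors with multiplicity. It therefore suffices to show that
\[
A(N,k)\colonequals |\{x\le N:\ \Omega(x)\ge k\}| \ \ll\ N^{\epsilon}
\qquad\text{whenever } k\ge c\log_2 N \text{ and } \epsilon>1-c .
\]

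Fix a parameter $z\in(1,2)$. Since $z^{\Omega(x)-k}\ge 1$ whenever $\Omega(x)\ge k$, we get
\[
A(N,k)\ \le\ z^{-k}\sum_{x\le N} z^{\Omega(x)}\ \le\ z^{-k}\, N\sum_{x\le N}\frac{z^{\Omega(x)}}{x}.
\]
The function $x\mapsto z^{\Omega(x)}/x$ is completely multiplicative. The last sum is therefore at most the Euler product $\prod_{p\le N}(1-z/p)^{-1}$, which converges factor by factor precisely because $z<2\le p$.

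The main step is to bound this product. The factor at $p=2$ is the constant $(1-z/2)^{-1}$, which depends only on $z$. For odd $p$ we have $z/p\le 2/3$, so $-\log(1-z/p)\le z/p+O_z(1/p^2)$. Mertens' estimate $\sum_{p\le N}1/p=\log\log N+O(1)$ then gives
\[
\prod_{p\le N}(1-z/p)^{-1}\ \ll_z\ (\log N)^{z}.
\]

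Combining these bounds with $k\ge c\log_2 N$ yields
\[
A(N,k)\ \ll_z\ N(\log N)^{2}\, z^{-c\log_2 N} \ =\ N^{\,1-c\log_2 z}(\log N)^2 .
\]
Given $\epsilon>1-c$, choose $z<2$ close enough to $2$ that $1-c\log_2 z<\epsilon$; this is possible because $\log_2 z\to 1$ as $z\to 2$. The logarithmic factor is then absorbed into $N^{\epsilon}$, which proves the claim. This also shows that the upper growth exponent is at most $1-c<1$.

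The only delicate point is keeping $z$ strictly below $2$, so that the Euler factor at $p=2$ is finite. That restriction is exactly why the lemma gives every exponent $\epsilon>1-c$ rather than $\epsilon=1-c$ itself.
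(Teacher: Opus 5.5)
Your proof is correct, and its first step is the paper's reduction: every $m_i\ge 2$ contributes a prime factor, so it suffices to count $x\le N$ with $\Omega(x)\ge k$.

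The difference lies in the tail estimate for $\Omega$. The paper does not prove one. It cites \cite[Lemma 13]{luca}, which gives $|\{x\le N:\Omega(x)\ge k\}|\le \frac{k}{2^k}N\log N$, and then uses $k\ge c\log_2 N$ to reach $N^{1-c}(\log N)^2$. You instead derive a bound from scratch, using Rankin's trick with a parameter $z<2$, the Euler product, and Mertens' theorem.

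Your route is self-contained and elementary. It costs an extra factor $N^{c(1-\log_2 z)}$ and an implied constant depending on $z$, hence on $\epsilon$. Both are harmless, because the lemma only claims the bound for every $\epsilon>1-c$.

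The cited bound is sharper and corresponds to the endpoint $z=2$. Your method can reach it, up to one logarithm:
write $x=2^a m$ with $m$ odd;
apply Rankin's trick with $z=2$ to $m$ alone, where the problematic Euler factor at $p=2$ is absent;
sum over the exponent $a$, with the terms $a\ge k$ contributing at most $N/2^k$.
The summation over $a$ is what produces the factor $k$.

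For the same reason, your closing remark overstates things slightly. In the paper's version, the restriction to $\epsilon>1-c$ is forced only by the logarithmic factors, not by any need to keep $z<2$.
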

\begin{proof}
    Denote the above set by $S$ and let $\Omega(x)= \sum_i a_i$ for $x=p_1^{a_1}p_2^{a_2}\cdots$ be the number of prime factors counted with multiplicity. Note that  $x \in S$ if and only if $\Omega(x) \geq k$. 
    We use \cite[Lemma 13]{luca}, which gives 
    $$|S|= \sum_{x \leq N, \, \Omega(x) \geq k} 1 \leq \frac{k}{2^k}N\log N.$$
    We have that $k/2^k \leq c \log_2N/2^{c \log_2 N}=cN^{-c} \log_2N $, so $|S|\leq N^{1-c} (\log N)^2<N^{\epsilon}$ asymptotically for every $\epsilon \in (1-c,1)$.
\end{proof}

We can now prove the relevant portion of Theorem~\ref{thm:phase}.

\begin{lemma}
If $d \in (0,2)$, then there exists a constant $\frac12 + \frac{d}{4}<c_2(d)<1$  such that 
    \[  |\cI_d \, \cap \,  \{1,\ldots, N\}|  \quad \ll \quad N^{c_2(d)}. \]    
\end{lemma}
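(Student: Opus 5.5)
The idea is that a graph of average degree below $2$ must have many connected components. Then $i(G)$, being multiplicative over components, has many prime factors counted with multiplicity, and Lemma~\ref{lemma:factors} shows that such integers are rare.

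\textbf{Structural step.} Let $G=(V,E)$ with $d(G)\le d<2$, and write $x=i(G)$. Since $|E|\le d|V|/2$ and a graph with $c$ components satisfies $|E|\ge |V|-c$, we get
\[ c \ \ge\ |V|-|E| \ \ge\ (1-d/2)|V|. \]
Every nonempty component $H$ has $i(H)\ge 2$, and
\[ i(G)=\prod_{H}i(H). \]
So $x$ is a product of at least $(1-d/2)|V|$ integers, each at least $2$. We also have the trivial bound $x=i(G)\le 2^{|V|}$, that is, $|V|\ge \log_2 x$. Hence $\Omega(x)\ge (1-d/2)\log_2 x$, since every factor $m_i\ge 2$ contributes at least one prime. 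So every element $x\in\cI_d$ satisfies this inequality. The key point is that the lower bound on the number of factors is expressed in terms of $x$ itself, not merely in terms of $N$.

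\textbf{Counting step.} I would split $\{1,\dots,N\}$ into dyadic blocks $(M/2,M]$ with $M=2^j\le 2N$. For $x$ in such a block, $\Omega(x)\ge k_M\colonequals\lceil(1-d/2)(\log_2 M-1)\rceil$. I would then apply the estimate from \cite[Lemma 13]{luca}, as in the proof of Lemma~\ref{lemma:factors}, at scale $M$:
\[ \#\{x\le M:\Omega(x)\ge k_M\}\le \frac{k_M}{2^{k_M}}M\log M \ll M^{d/2}(\log M)^2 . \]
This is just Lemma~\ref{lemma:factors} with $c=1-d/2-o(1)$, giving the bound $M^{d/2+o(1)}$. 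Summing over the $O(\log N)$ dyadic scales gives
\[ |\cI_d\cap\{1,\dots,N\}|\ll N^{d/2}(\log N)^3 . \]
Since $d<2$, we have $d/2<\tfrac12+\tfrac d4<1$, so any $c_2(d)\in(\tfrac12+\tfrac d4,1)$ works.

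\textbf{Main obstacle.} The main difficulty is the dependence of the factor count on $x$ rather than on $N$. Small values of $x$ (for example $x\le\sqrt N$) are only guaranteed about $(1-d/2)\log_2 x$ factors, so a single application of Lemma~\ref{lemma:factors} at scale $N$ fails. The dyadic decomposition handles this. Alternatively, one can bound the $x\le N^{1/2}$ range trivially by $N^{1/2}$ and apply Lemma~\ref{lemma:factors} with $c=(1-d/2)/2$ to the $x\in(N^{1/2},N]$ range. That gives exactly the weaker exponent $\tfrac12+\tfrac d4+\epsilon$, matching the stated range for $c_2(d)$.
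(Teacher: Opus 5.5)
Your proof is correct, and your main route is sharper than the paper's. The paper never derives the intrinsic inequality $\Omega(x)\ge(1-d/2)\log_2 x$. Instead it sets $d_1=1-d/2$ and splits graphs according to whether the number $k$ of components satisfies $k\ge \frac{d_1}{2}\log_2 N$.
\begin{itemize}
\item In that case it applies Lemma~\ref{lemma:factors} with $c=d_1/2$ and gets $N^{\epsilon}$ for $\epsilon\in(\frac12+\frac d4,1)$.
\item Otherwise $d_1 n\le k<\frac{d_1}{2}\log_2 N$ forces $n<\frac12\log_2 N$, so $i(G)\le 2^n\le N^{1/2}$.
\end{itemize}
This is exactly your fallback argument. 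You cut on the size of $x$ and the paper cuts on the number of components, but both amount to the same threshold $\sqrt N$. That single cut is where the exponent $\frac12+\frac d4$ comes from.

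Your dyadic version uses the fact that the number of forced prime factors grows like $\log x$ at every scale, not only like $\log N$ above $\sqrt N$. It costs one extra summation but yields $|\cI_d\cap\{1,\dots,N\}|\ll N^{d/2}(\log N)^3$. So $c_2(d)$ may be any number exceeding $d/2$, which genuinely improves on the stated range; the authors remark that they did not try to optimize $c_2(d)$. The paper's version is shorter and uses Lemma~\ref{lemma:factors} only as a black box at the single scale $N$.

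One point of hygiene: Lemma~\ref{lemma:factors} is stated for a fixed $c$, so invoking it with ``$c=1-d/2-o(1)$'' is informal. It is cleaner to do what you in fact do and apply the explicit bound $\frac{k}{2^k}M\log M$ directly at each scale. Alternatively, use the elementary bound $\#\{x\le M:\Omega(x)\ge k\}\le 2^{-k}\sum_{x\le M}2^{\Omega(x)}\ll M(\log M)^2\,2^{-k}$. Either way the implied constants are uniform in $M$, and the finitely many small scales where $k_M=0$ are handled by the trivial bound.
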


\begin{proof}
    The idea is that if a graph is sparse then it has many connected components, each contributing a factor in $i(G)$. 
    Let $G$ have $n$ vertices and $k$ connected components $G_1,\ldots,G_k$ and average degree $d$. Then since $G_i$ is connected we have $|E(G_i)| \geq n_i-1$, so
    $$d \geq 2 \frac{ \sum_i |E(G_i)|}{\sum_i n_i} \geq 2 \frac{ n-k}{n} $$
    and hence 
    $k \geq n(1-d/2) =d_1n$ for some $0<d_1<1$.

    We also have that $i(G) = i(G_1)\cdots i(G_k)$ and $i(G_j) \geq 2$ for each component. Let $S'=\{ i(G) \leq N \}$ for graphs $G$ for which $k \geq d_1/2 \log_2N$, and let $S''$ be the set of numbers of independent sets  of graphs with number of connected components $<d_1/2 \log_2 N$. 

    In the first case we apply Lemma~\ref{lemma:factors} since $i(G) = i(G_1)\cdots i(G_k)$ and $i(G_j) \geq 2$ for each component and obtain $|S'| \leq N^\epsilon$ for any $\epsilon \in (1-d_1/2,1)=(1/2+d/4,1)$.

    In the second case we have 
    $d_1n \leq k < d_1/2 \log_2N$, so $n < \frac12 \log_2 N$. For graphs on $n$ vertices we have $i(G) \leq 2^n$ so $i(G) \leq N^{1/2}$ and hence $|S''| \leq N^{1/2}$. 

    Since $\cI_d \cap \{1,\ldots,N\} = S'\cup S''$, we get a bound of $N^{1/2} + N^{\epsilon}\ll N^{\epsilon}$. We can then set $c_2(d) = \epsilon \in \left( \frac12 + \frac{d}{4},1\right)$.  
\end{proof}

\medskip

\subsection{Proof of the completeness  part of Theorem~\ref{thm:phase}}
Here we will show that the numbers of independent sets of graphs with sufficiently large constant average degree can achieve all positive integers.
\smallskip

\begin{lemma}\label{lemma9}
There exists $D>0$ such that 
if $d \geq D$, then $\cI_d$ contains every positive integer.\end{lemma}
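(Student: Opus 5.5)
We plan to combine Shkredov's Theorem~\ref{thm:Shk} with a prime factorization and the multiplicativity of $i$ over disjoint unions. Since graphs in $\cI_d$ need not be connected, if $m=p_1p_2\cdots p_s$ with primes $p_j$ and each $p_j=i(G_j)$ for a graph $G_j$ of average degree at most $D$, then $G=G_1\sqcup\cdots\sqcup G_s$ satisfies $i(G)=m$. Its average degree is $2\sum_j|E(G_j)|/\sum_j|V(G_j)|\le \max_j d(G_j)\le D$, because a ratio of sums is bounded by the maximal ratio. The case $m=1$ is the empty graph. It therefore suffices to realize every prime $p$ as $i(G_p)$ with $d(G_p)\le D$ for a fixed absolute constant $D$.

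Fix the absolute constant $A$ from Theorem~\ref{thm:Shk}, and let $p$ be prime, so $p\in Q_A$ and $\tfrac{r}{p}=[a_1,\ldots,a_\ell]$ with all $a_i\le A$. We then repeat the gluing construction from the proof of Proposition~\ref{planarprop}, but drop planarity. For each $k\in\{1,\ldots,A\}$ we use the gadget $G'_k=K_{k+1}$ with two distinguished vertices $v',w'$. Both are adjacent to everything, and $G'_k-v'-w'=K_{k-1}$, so $i(G'_k-v'-w')=k$. The recursion \eqref{eq:tkapp} holds verbatim, since its derivation only used that $v'$ and $w'$ are adjacent to each other and to all other vertices of $G'$, not planarity. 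Starting from a single vertex with $(a,b)=(1,1)$ and applying the gadgets for $a_1,\ldots,a_\ell$ gives a marked graph $(G,v)$. Following the matrix identity in Proposition~\ref{planarprop}, the final marked graph then satisfies $i(G-v)=p$. (Equivalently, $b=p$ is the quantity read off, exactly as in that proof.)

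Next we bound the average degree of $G_p\colonequals G-v$. Each gluing step adds one copy of $K_{a_i+1}$ sharing a vertex with the previous graph. It therefore contributes $a_i$ new vertices and $\binom{a_i+1}{2}\le \binom{A+1}{2}$ edges, while adding at least one new vertex. Hence $|E(G_p)|\le \binom{A+1}{2}|V(G_p)|$ up to an additive constant, and $d(G_p)\le A(A+1)+O(1)$. Taking $D$ to be this bound (with small primes handled by inspection, or by noting every step adds at least one vertex so no additive constant is needed) gives $p\in\cI_D$. Since $\cI_d\supseteq \cI_D$ for $d\ge D$, every positive integer lies in $\cI_d$.

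The main obstacle is the bookkeeping: checking that the recursion \eqref{eq:tkapp} and the identification $i(G-v)=p$ carry over without planarity, and verifying that the edge-to-vertex ratio of each gadget step is uniformly bounded, including at the initial steps. The genuinely deep input, that every prime has bounded partial quotients, is supplied entirely by Theorem~\ref{thm:Shk}. The prime-factorization reduction is what lets us avoid needing all of $\mathbb{N}\subseteq Q_A$.
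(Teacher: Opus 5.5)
Your route is the paper's: reduce to primes via disjoint unions, realize each prime through Theorem~\ref{thm:Shk} and the clique gadgets $K_{k+1}$ in the recursion \eqref{eq:tkapp}, and bound the average degree by a constant. Your constant $A(A+1)$ is cruder than the paper's $D=A+1$, which comes from $\binom{k+1}{2}/k=(k+1)/2$, but any constant suffices.

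However, the step you yourself flagged, $i(G-v)=p$, is false as you set it up. With the single-vertex seed $(a,b)=(1,1)$, the final pair is
\[
\binom{a}{b}\ =\ T_{a_\ell}\cdots T_{a_1}\binom{1}{1}.
\]
The bottom row of $T_{a_\ell}\cdots T_{a_1}$ is $(r,p)$, so you get $b=i(G-v)=r+p$, not $p$. Equivalently, $\binom{1}{1}=T_1\binom{0}{1}$, so this seed realizes the denominator of $[1,a_1,\ldots,a_\ell]$ instead. Concretely, for $\tfrac12=[2]$, gluing one $K_3$ to a single vertex gives $G-v=K_2$, and $i(K_2)=3$.

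The repair is easy. Seed instead with $K_{a_1}$, marked at any vertex, which has $(a,b)=(1,a_1)=T_{a_1}\binom{0}{1}$. Then glue the gadgets for $a_2,\ldots,a_\ell$. The final pair is $T_{a_\ell}\cdots T_{a_1}\binom{0}{1}$, whose bottom entry is $p$. Alternatively, if $a_1\ge 2$, replace $r$ by $p-r$ using $\tfrac{p-r}{p}=[1,a_1-1,a_2,\ldots,a_\ell]$, and let the leading $1$ be absorbed by the single-vertex seed.

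The seed $K_{a_1}$ has edge-to-vertex ratio $(a_1-1)/2$, so your average-degree bound still holds. The paper's write-up of Proposition~\ref{planarprop} contains the same slip: it says the construction starts from a single vertex, even though it lists the seeds $(1,k)\in\cZ$ that this fix uses. So appealing to ``exactly as in that proof'' imports the error rather than a verified identity.
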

\smallskip

\begin{proof}
It suffices to show that $\cI_d$ contains every prime, since every integer can then be obtained by taking disjoint unions of graphs and this does not increase the average degree.    

Let $A$ be the absolute constant from Theorem~\ref{thm:Shk}. This 
implies that for any prime $q$, there exists a positive integer 
$p < q$ such that $p/q = [a_1, \ldots, a_\ell]$ and 
$a_1, \ldots, a_\ell \leq A$. Following the same argument as in the 
proof of Proposition~\ref{planarprop}, it remains to construct a 
family of marked graphs $(G', v', w')$ satisfying 
$i(G' - v' - w') = k$ for each $k \in \{1, \ldots, A\}$. Such a 
family is easily obtained by taking $G'$ to be the complete graph 
$K_{k+1}$. 
Note that attaching an additional component $G' = K_{k+1}$ increases the edge count by $\binom{k+1}{2}$ and the number of vertices by $k$. Consequently, any graph $G$ produced by this construction satisfies the density bound
\[
d(G) \leq \max_{k \in [A]} \frac{2\binom{k+1}{2}}{k} = A+1.
\]
The theorem follows by setting $D := A + 1$.\end{proof}

\section{Final remarks}\label{sec:final}

\subsection{Future direction on trees}\label{ss:tre}
The constant in Theorem~\ref{thm:tre} (main result for trees) could potentially be
improved in a few ways. 
For example, if one were to believe  Hensley's conjecture~\cite{hensley96} (i.e. that 
the set $Q_2$ in Zaremba's Conjecture contains all but finitely many integers), then the argument in our proof of Theorem~\ref{thm:tre} can be used to show that the lower growth exponent of $\Itre$ is at least $1/2$.
However, showing that the lower density of $\Itre$ is positive (which in turn implies that the lower growth exponent is 1) will likely require new arguments.

\medskip

\subsection{Future direction on planar graphs}
There are several ways in which one can improve Theorem~\ref{thm:pla} (main result for planar graphs).
For example, if one were to believe  Zaremba's Conjecture~\ref{conj:Zar},
then the argument in our proof of Theorem~\ref{thm:pla} can already be used to resolve Conjecture~\ref{conj:pla} (the planar Linek's Problem).
However, such an approach is likely unnecessarily strong. Indeed, our current method exploits very little of the combinatorial structure of planar graphs. It is entirely possible that a purely combinatorial proof, utilizing larger families of planar graphs, would suffice without relying on deep results from number theory. However, to resolve this problem combinatorially, one needs a better understanding of the possible pairs $(a,b)$ or generalizations of such quantities. Local operations might not be enough.

\medskip

\subsection{Future direction on graphs with bounded average degree}
We do not attempt to optimize the constants $c_1(d)$ and $c_2(d)$ 
in the first part of Theorem~\ref{thm:phase}, as the primary goal 
of the theorem is simply to bound them away from $0$ and $1$, 
respectively. We believe that for $d \in (0,2)$, the growth exponent 
of $\aI_d$ actually exists, meaning that $c_1(d)$ and $c_2(d)$ can 
be chosen to be arbitrarily close to each other. However, proving 
such a statement appears out of reach with current methods and is 
secondary to the main focus of this work.

A much more interesting question is whether the second part of 
Theorem~\ref{thm:phase} can be improved to show that $D = 2$, 
thereby resolving Conjecture~\ref{conj:phase}.
The current constant $D$ is given by $D = A +1$, where $A$ is the absolute 
constant from Theorem~\ref{thm:Shk}. While $A$ is explicitly 
computable (see \cite[p.~6]{Shk}), this value is likely far from 
optimal.
As is the case 
with Theorem~\ref{thm:pla} for planar graphs, our current approach 
relies on heavy machinery from number theory that is likely 
unnecessary. Finding a more elementary proof of the current result 
could pave the way toward a full resolution of the conjecture.

\medskip

\subsection{Connections between inverse enumeration problems and continued fractions}
Our approach to studying inverse enumeration problems through
the lens of continued fraction theory is inspired by the
recent work of Chan--Kontorovich--Pak~\cite{CKP}. That paper
introduced this strategy to obtain the first known exponential
lower bound for Sedl\'{a}\v{c}ek's problem. Specifically, they
established that
\[ |T(n)| \geq (1.1103)^n, \]
where ${T}(n)$ is the set of integers that can be
realized as the number of spanning trees of a simple
connected graph on $n$ vertices.
The current best lower bound in the literature is $(1.55)^n$, due to Alon, Bucić, and Gishboliner~\cite{ABG}, who also provide a simplified proof.

Another work on inverse enumeration problems that uses similar techniques is the recent work of Chan--Kontorovich--Pak~\cite{CKP25}, which employed Bourgain's results~\cite{Bou12} on the sum of continued fractions to achieve the optimal bound for the \emph{inverse effective resistance problem}.
Finally, we also note the work of Agol and Krushkal \cite{AK} on the exponential lower bound for the \emph{inverse chromatic polynomial problem}, as well as the recent follow-up by Miyazaki, Pohoata, and Zheng \cite{MPZ}, which strengthens the Agol--Krushkal result by unifying their approach with ideas from Alon--Bucić--Gishboliner~\cite{ABG}.

\medskip 

\subsection{Independence polynomials of trees}\label{ss:ind_poly} 
The number of independent sets of trees can be refined via their independence polynomial $$I_G(x) = \sum_{k=0}^{n} i_k(G)x^k,$$
 where $i_k(G)$ is the number of independent sets of size $k$ in $G$. A conjecture of Alavi--Malde--Schwenk--Erd\H{o}s  from 1987 \cite{alavi87} states that the sequence $\{i_k(T)\}_k$ is unimodal when the graph is a tree. Many partial results have provided further evidence towards this conjecture, while also disproving the stronger log-concavity property. The literature on this topic is vast, see some of the most recent developments in~\cite{bencs2025limit,heilman2025independent,galvin2025trees,ramos2025ai} and references therein. While we study the inverse problem of finding which integers are equal to $I_T(1)$, it would be a much more challenging question to characterize the independence sequences $\{i_k(T)\}_k$.

\medskip

\subsection{Caterpillars} A special class of trees, also of great relevance to \S\ref{ss:ind_poly}, are the caterpillar trees which consist of a spine of vertices $v_1, v_2,\ldots,v_k$ with $(v_i,v_{i+1})\in E$ and ``cherries'' of leaves connected to each vertex, so vertex $v_i$ has $a_i$ many leaves connected to it. It is not very hard to see that the trees constructed in Section \ref{sec:tre} are actually from that class. Computational results show that the numbers of independent sets of caterpillars already cover close to $1/2$ of the integers in large enough intervals and we conjecture that this is indeed the case. 

\subsection{Vertex minimality} A related problem concerns optimizing with respect to the number of vertices. In particular, what is the smallest number of vertices $v=v(N)$ needed to construct a graph with number of independent sets $N$.  For instance, \cite[Ex.~5.8]{CPcoincidences} demonstrates a construction where $\log_2 N \leq v(N) \leq 2 \log_2 (N+1)$.
If we assume that trees achieve all but finitely many numbers (Conjecture~\ref{conj:effective_linek}), it is then easy to see $N \geq F_{v(N)+2}$, so $v(N) \leq \log_\phi N$ (i.e. a multiplicative constant improvement for the upper bound). 
We can then ask: for a given $N$, what is the asymptotic behavior of the ratio $\frac{\log v(N)}{\log N}$?


\section*{Acknowledgement}
We thank Lior Gishboliner, Alex Kontorovich, and Cosmin Pohoata for stimulating discussions on these problems and Noga Alon and Igor Rivin for interesting questions and directions.
The first author would like to thank the National University of Singapore for their warm hospitality during his sabbatical in Spring 2026.

\section{Appendix: Forbidden Integers}\label{sec:app}

Here is a list of the 823 known integers that cannot be equal to the number of independent sets of a tree.

4
6
7
10
11
12
15
16
18
19
20
25
27
28
29
30
31
32
39
42
45
46
47
48
49
51
52
53
54
56
63
67
71
72
73
74
75
78
79
81
82
85
86
87
88
90
91
103
111
115
117
119
123
125
127
130
131
132
135
136
137
138
139
140
141
142
143
146
147
151
155
177
179
185
191
193
201
203
205
207
208
210
211
213
214
215
219
220
221
223
227
228
229
230
231
235
237
238
239
240
244
247
251
265
271
291
295
299
301
303
315
322
325
329
331
333
337
339
341
343
345
347
350
355
357
358
359
361
362
363
369
373
374
379
385
387
389
391
399
400
407
413
427
437
455
463
467
475
477
481
483
487
497
507
509
515
519
521
529
535
539
541
543
551
555
559
561
563
567
573
575
577
579
583
585
591
593
594
605
607
609
611
621
622
623
625
626
627
628
629
633
639
641
645
646
647
656
677
683
699
703
709
719
721
759
787
797
805
813
827
837
839
847
851
855
859
869
871
875
883
887
891
897
901
907
911
913
915
921
923
925
927
933
935
937
939
943
949
953
955
961
963
967
969
971
973
975
977
979
989
991
993
995
999
1001
1003
1007
1009
1013
1015
1017
1018
1019
1023
1031
1041
1043
1045
1047
1063
1069
1081
1083
1095
1103
1111
1117
1123
1129
1159
1171
1179
1189
1235
1247
1281
1287
1333
1339
1363
1383
1385
1389
1397
1439
1441
1459
1461
1463
1469
1477
1493
1513
1517
1519
1523
1525
1526
1527
1531
1539
1541
1553
1567
1569
1571
1573
1581
1595
1598
1603
1605
1615
1623
1625
1631
1635
1639
1642
1643
1647
1651
1655
1657
1667
1671
1679
1685
1687
1691
1703
1711
1723
1729
1747
1751
1759
1763
1767
1771
1791
1793
1817
1823
1829
1871
1923
1931
1937
2081
2111
2119
2171
2179
2195
2209
2237
2239
2249
2269
2287
2291
2299
2353
2355
2359
2369
2377
2379
2381
2399
2403
2411
2427
2451
2457
2471
2481
2485
2491
2497
2499
2513
2515
2517
2519
2539
2549
2555
2567
2569
2571
2577
2593
2597
2599
2605
2611
2619
2621
2625
2633
2639
2643
2653
2657
2667
2677
2685
2713
2715
2721
2731
2737
2743
2755
2758
2761
2771
2811
2815
2821
2831
2835
2839
2841
2861
2863
2921
2933
2941
2951
2975
2989
3061
3113
3119
3137
3259
3287
3371
3389
3403
3409
3439
3515
3535
3601
3613
3619
3635
3647
3679
3763
3775
3791
3831
3847
3867
3871
3895
3913
3923
3927
3931
3935
3955
3967
3979
3983
3985
3991
4013
4039
4043
4049
4063
4067
4069
4081
4095
4103
4107
4111
4113
4119
4139
4151
4165
4175
4177
4189
4195
4199
4203
4207
4209
4229
4233
4259
4263
4287
4303
4305
4331
4337
4367
4375
4379
4411
4417
4431
4433
4435
4451
4465
4475
4487
4519
4521
4522
4529
4547
4555
4561
4565
4579
4603
4607
4619
4657
4667
4675
4691
4727
4739
4741
4787
4795
4803
4811
4867
4907
4981
4997
5011
5033
5041
5099
5173
5227
5417
5423
5581
5591
5603
5611
5653
5719
5725
5821
5885
5939
5993
5997
5999
6167
6171
6223
6301
6313
6323
6451
6477
6491
6499
6539
6563
6575
6577
6607
6617
6627
6653
6727
6739
6751
6755
6757
6803
6827
6901
6929
6941
6947
6973
6977
6983
7049
7055
7059
7067
7117
7169
7175
7193
7203
7211
7265
7279
7343
7357
7369
7371
7379
7385
7401
7419
7431
7441
7451
7457
7459
7463
7489
7517
7559
7619
7649
7679
7733
7769
7775
7811
7817
7819
7847
7927
7987
8041
8047
8057
8059
8107
8117
8167
8261
8393
8399
8461
8467
8831
9023
9089
9131
9181
9463
9581
9611
9647
9691
9767
9811
9863
9883
10037
10079
10093
10127
10293
10399
10581
10733
10747
10751
10775
10789
10999
11027
11029
11167
11183
11257
11353
11413
11425
11443
11503
11543
11603
11639
11669
11671
11759
11769
11793
11827
11969
11989
12019
12079
12089
12121
12131
12323
12329
12517
12547
12565
12583
12639
12749
12931
12953
12979
12989
13015
13051
13099
13111
13139
13199
13291
13363
13501
13513
13607
13643
13807
13879
14483
14993
15329
16643
16693
16751
16879
17131
17183
17233
17427
17471
17507
17571
17759
17837
18221
18275
18383
18389
18667
18679
18893
18899
18943
19139
19207
19265
19333
19477
19481
19565
19765
19939
20009
20373
20387
20447
20537
20543
20659
21223
21419
21527
21989
22183
22453
24101
24239
24433
25499
26167
26219
26719
27167
27527
29371
29537
29639
30319
30471
30821
31493
31997
32699
32767
32857
32873
33031
33073
34139
34483
34789
34909
36619
37027
43039
45983
47401
48917
49253
55487
88013


\newcommand{\etalchar}[1]{$^{#1}$}
\providecommand{\bysame}{\leavevmode\hbox to3em{\hrulefill}\thinspace}
\providecommand{\MR}{\relax\ifhmode\unskip\space\fi MR }
\providecommand{\MRhref}[2]{%
  \href{http://www.ams.org/mathscinet-getitem?mr=#1}{#2}
}
\providecommand{\href}[2]{#2}

\end{document}